\renewcommand{\phi}{\varphi}
\newcommand{\sumtwo}[2]{\sum_{\substack{#1 \\ #2}}}
\newcommand{\gep}{\varepsilon}
\title{Scaling limits for the random walk penalized by its range in dimension one}
\author{Nicolas Bouchot}
\begin{document}

\pagestyle{plain}
\maketitle

\begin{abstract}
In this article we study a one dimensional model for a polymer in a poor solvent: the random walk on $\ZZ$ penalized by its range.
More precisely, we consider a Gibbs transformation of the law of the simple symmetric random walk by a weight $\exp(-h_n |\mathcal{R}_n|)$, with $|\mathcal{R}_n|$ the number of visited sites and $h_n$ a size-dependent positive parameter.
We use gambler's ruin estimates to obtain exact asymptotics for the partition function, that enables us to obtain a precise description of trajectories, in particular scaling limits for the center and the amplitude of the range. A phase transition for the fluctuations around an optimal amplitude is identified at $h_n \asymp n^{1/4}$, inherent to the underlying lattice structure. \newline

\noindent \textsc{Keywords:} random walk, directed polymer, confined walk, gambler's ruin
\newline
\textsc{2020 Mathematics subject classification:} 60G50, 82B41, 60C05
\end{abstract}

\section{Introduction of the model and main results}

Consider a simple symmetric random walk $(S_k)_{k \geq 0}$ on $\ZZ^d$, $d\geq 1$, starting from $0$, with law denoted $\PP$. For $h > 0$, we define the following Gibbs transformation of $\PP$, called the \emph{polymer measure}
\[ \dd \PP_{n,h}(S) = \frac{1}{Z_{n,h}} e^{-h|\mathcal{R}_n(S)|} \dd \PP(S), \]
where $\mathcal{R}_n(S) \defeq \big\{ S_0, \dots, S_n \big\}$ is the range of the random walk up to time $n$ and $|\cdot|$ is the cardinal measure. The normalizing quantity
\[ Z_{n,h} = \esp{e^{-h|\mathcal{R}_n(S)|}} \]
is called the \textit{partition function} and is such that $\PP_{n,h}$ is a probability measure on the space of trajectories of length $n$.

\medskip
In any dimension $d \geq 1$, the asymptotics for the log-partition function are known since Donsker and Varadhan \cite{DV-Partequiv-79}. These asymptotics strongly suggest that a polymer of length will typically fold in (and fill up) a ball of radius $\rho\, n^{\frac{1}{d+2}}$ for some specific constant $\rho = \rho(d,h)$. This has been proved by \cite{bolthausen1994localization} in dimension $d=2$, but only much more recently in dimension $d\geq 3$, by Berestycki and Cerf \cite{Berestycki} and Ding, Fukushima, Sun and Xu \cite{ding2020geometry}. More precisely, for $h=1$ (easily generalized to any $h>0$), they prove that there exists a positive $\rho_d$, which only depends on the dimension $d$, such that for any $\eps > 0$,
\[ \lim_{n\to\infty} \probaM{n,1}{\exists x \in \RR^d, B\big( x,(1-\eps)\rho_d n^{\frac{1}{d+2}} \big) \cap \ZZ^d \subset \mathcal{R}_n \subset B\big( x,(1+\eps)\rho_dn^{\frac{1}{d+2}} \big)} = 1 \,, \]
where $B(x,r)$ is the $d$-dimensional Euclidean ball centered at $x$ with radius $r$.

\smallskip
In dimension $d=1$, this is much easier since the range is uniquely determined by its two endpoints (and always fills completely the one-dimensional ball). This allows for more explicit calculations using mostly gambler's ruin estimates. In particular, one easily derives that $n^{-1/3}|\mathcal{R}_n|$ converges to $\big(\frac{\pi^2}{h}\big)^{1/3}$ in $\PP_{n,h}$-probability.

\subsection{Outline of the paper}

In the rest of the work, we focus only on the case of dimension $d=1$. Also, we allow the penalization intensity to depend on the length of the polymer, meaning $h = h_n$ now depends on $n$.
We exploit gambler's ruin estimates to their full potential and derive exact asymptotics for the partition function (not only for the log-partition function). Afterwards, we will be able to prove a scaling limit (actually we prove a local limit theorem) for the joint law of the center $W_n$ and the amplitude $T_n$ of the range:
\[ T_n \defeq \max_{k \leq n} S_k - \min_{k \leq n} S_k = |\mathcal{R}_n| - 1, \qquad W_n \defeq \frac{T_n}{2} + \min_{k \leq n} S_k = \frac{1}{2} \left( \max_{k \leq n} S_k + \min_{k \leq n} S_k \right). \]

For the sake of the exposition, let us consider the case
\begin{equation}
\label{def:hn}
\lim_{n\to\infty} n^{-\gamma} h_n = \hat h \in (0,+\infty)   \,,\qquad \text{ for some } \gamma\in \RR\,.
\end{equation}
Some results are already presented in \cite{berger2020one} which considers a disordered version of the model:
\begin{itemize}
    \item[(i)] if $\gamma < -\frac{1}{2}$ then $\PP_{n,h_n}$ is close in total variation to $\PP$;
    \item[(ii)] if $\gamma \in (-\frac{1}{2},1)$ then $(\frac{n \pi^2}{h_n})^{-1/3} T_n$ converges to $1$ in $\PP_{n,h_n}$-probability.
    \item[(iii)] if $\gamma > 1$ then $\PP_{n,h_n}$ is concentrated on trajectories visiting only two sites.
\end{itemize}
Since cases (i) and (iii) are degenerate, we focus on the case $\gamma \in (-\frac{1}{2},1)$. In this paper, we give another proof of the convergence $(\frac{n \pi^2}{h_n})^{-1/3}  T_n \to 1$ and we additionally identify the fluctuations of $T_n-(\frac{n \pi^2}{h_n})^{1/3}$.
We find that a phase transition occurs at $\gamma = \frac14$ for the fluctuations: 
\begin{itemize}
    \item[(i)] if $\gamma < \frac14$ then the fluctuations, normalized by $(\frac{n}{h_n^4})^{1/6}$, converge to a Gaussian variable;
    \item[(ii)] if $\gamma > \frac14$ then the range penalization is strong enough to collapse the range on $(\frac{n\pi^2}{h_n})^{1/3}$ in the sense that the fluctuations live on a finite set (of cardinality $1$, sometimes $2$).
\end{itemize}
\par We will also prove that $(\frac{n\pi^2}{h_n})^{-1/3} W_n$ converges to a random variable with density $\frac{\pi}{2}\cos(\pi u) \mathbbm{1}_{[-\frac12,\frac12]}(u)$ with respect to the Lebesgue measure and is independent of the fluctuations. This type of results appears to be folklore for confined polymers, as the density is the eigenfunction associated with the principal Dirichlet eigenvalue of the Laplacian on $[0,1]$ (see e.g.\ \cite[Ch.~8]{den2009random}), but we are not aware of a proof written in detail (at least for the random walk penalized by its range).

\begin{notation}
In the rest of the paper we shall use the standard notations:
as $x\to a$, we write
$g(x)\sim f(x)$ if $\lim_{x\to a} \frac{g(x)}{f(x)} =1$,
$g(x) = \bar{o}(f(x))$ if $\lim_{x\to a} \frac{g(x)}{f(x)} = 0$,
$g(x) = \grdO(f(x))$ if  $\limsup_{x\to a}\big| \frac{g(x)}{f(x)} \big| < +\infty$ and $f \asymp g$ if $g(x) = \grdO(f(x))$ and $f(x) = \grdO(g(x))$.

\smallskip
We also extensively use the following notation: for $\mathcal A$ an event, we denote
\[
Z_{n,h_n} (\mathcal A) \defeq \esp{e^{-h_n|\mathcal{R}_n(S)|} \indic{S\in \mathcal A}} \,,
\]
so that in particular $\PP_{n,h_n}(\mathcal A) = \frac{1}{Z_{n,h_n}} Z_{n,h_n} (\mathcal A)$.
\end{notation}

\subsection{Main results}

The following two theorems summarize our results, the first being the main result regarding the asymptotic behavior of $(T_n,W_n)$ and the second being asymptotics for $Z_{n,h_n}$ that have a use of their own.

We define the following quantities, that will be used throughout the paper:
\begin{equation}
\label{def:Tnan}
T_n^* = T_n^*(h_n) \defeq \left( \frac{n \pi^2}{h_n}\right)^{1/3},
\qquad 
a_n = a_n(h_n) \defeq \frac{1}{\sqrt{3}} \left( \frac{n \pi^2}{h_n^4}\right)^{1/6} =\frac{1}{\sqrt{3n\pi^2}}(T_n^*)^2 .
\end{equation}
Note that  $\lim\limits_{n\to\infty} a_n =+\infty$ if and only if $\lim\limits_{n\to\infty} n^{-1/4} h_n = 0$.

\begin{theorem}
\label{th-limite}
\textbullet\ Assume that $h_n \geq n^{-1/2} (\log n)^{3/2}$ and $\lim\limits_{n\to\infty} n^{-1/4} h_n = 0$; in other words, $\gamma \in (-\frac12,\frac14)$ in~\eqref{def:hn}. Then under $\PP_{n,h_n}$, we have the following convergence in distribution
\[ 
\left( \frac{T_n - T_n^*}{a_n} \, ; \, \frac{W_n}{T_n^*} \right) \xrightarrow[n \to +\infty]{(d)} (\mathcal T, \mathcal W), 
\]
where the random variables $\mathcal T$ and $\mathcal W$ are independent with $\mathcal T \sim \mathcal N(0,1)$ and $\mathcal W$ with density given by $\frac{\pi}{2} \cos(\pi u) \mathbbm{1}_{[-\frac12,\frac12]}(u)$.

\smallskip
\textbullet\ Assume that $\lim\limits_{n\to\infty} n^{-1/4}  h_n =+\infty$ and $\lim\limits_{n\to\infty} n^{-1} h_n = 0$; in other words, $\gamma \in (\frac14,1)$ in~\eqref{def:hn}. Denote $t_n^o$ the decimal part of $T_n^* - 2$ and define $\mathcal{A}_n$ as $\mathset{0}$ if $t_n^o < \frac12$, $\mathset{1}$ if $t_n^o > \frac12$ and $\mathset{0,1}$ if $t_n^o = \frac12$. Then we have 
\[
\lim_{n\to\infty}\PP_{n,h_n} \big( T_n - \lfloor T_n^*-2 \rfloor \not\in \mathcal{A}_n \big) = 0.
\]
Also, under $\PP_{n,h_n}$ we have the convergence in distribution $\frac{W_n}{T_n^*}\xrightarrow{(d)}\mathcal W$.
\end{theorem}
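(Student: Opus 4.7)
The plan is to compute the joint law $\PP_{n,h_n}(T_n = t, W_n = w)$ precisely for each admissible pair $(t,w)$, and then sum to obtain the marginal behaviour. Writing
\[ \PP_{n,h_n}(T_n = t, W_n = w) = \frac{1}{Z_{n,h_n}} \, e^{-h_n(t+1)} \, \PP\big(T_n = t, W_n = w\big), \]
the numerator reduces, via inclusion-exclusion on the events $\{\min \geq m\} \cap \{\max \leq M\}$, to four confinement probabilities for the simple random walk on discrete intervals of width at most $t+1$. Each such confinement probability admits an exact Fourier/spectral expansion in terms of the eigenvalues $\cos(k\pi/(t+2))$ of the transition operator on $\{0,\ldots,t\}$ killed at $\{-1, t+1\}$, with sinusoidal eigenfunctions.

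In the regime of interest $t \asymp T_n^*$, only the first eigenvalue contributes at leading order, yielding
\[ \PP(T_n = t, W_n = w) = (1+\bar{o}(1)) \, C(t,n) \, \cos\!\Big( \frac{\pi w}{t+2} \Big) \cos^n\!\Big(\frac{\pi}{t+2}\Big), \]
uniformly on a window of size $a_n \log n$ around the saddle, with an explicit prefactor $C(t,n)$. Expanding $\cos^n(\pi/(t+2)) = \exp(-n\pi^2/(2(t+2)^2) + \grdO(n/t^4))$ and combining with the penalization $e^{-h_n(t+1)}$, the effective exponent $f(t) = h_n t + n\pi^2/(2(t+2)^2)$ is minimized at $t = T_n^* - 2$, with curvature $f''(T_n^* - 2) = 3h_n/T_n^* = 1/a_n^2$ by the definitions in \eqref{def:Tnan}. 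This accounts for the shift by $-2$ appearing in the second regime and produces Gaussian fluctuations of variance $a_n^2$. A saddle-point expansion then factorises the $(t,w)$ sum: the $w$-dependence is entirely carried by the cosine eigenfunction, which after renormalisation gives the density $\frac{\pi}{2} \cos(\pi u) \mathbbm{1}_{[-\frac12,\frac12]}(u)$, while the $t$-dependence reduces to a discrete Gaussian centered at $T_n^* - 2$. The independence of $\mathcal T$ and $\mathcal W$ follows directly from this factorisation.

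The two regimes are then distinguished by the scale of $a_n$. When $a_n \to \infty$ (i.e.\ $\gamma < \frac14$), the discrete Gaussian in $t$ is coarse-grained by the rescaling $(T_n - T_n^*)/a_n$ into a continuous standard Gaussian; the shift by $-2$ is negligible compared to $a_n$. When $a_n \to 0$ (i.e.\ $\gamma > \frac14$), the Gaussian envelope is sharper than the lattice spacing, so $T_n$ concentrates on the integer(s) nearest to $T_n^* - 2$, producing the statement about $\mathcal{A}_n$. In both regimes, the $W_n$-marginal is obtained by summing out $t$ and converges by a Riemann-sum approximation to the cosine density, regardless of the regime.

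The main obstacle is the uniform control of the spectral expansion across the relevant window of $(t,w)$: one needs precise asymptotics for the confinement probability when $n/t^2$ is of order $1$, ensuring the contribution of the second eigenvalue $\cos^n(2\pi/(t+2))$ is negligible and that the replacement $\cos^n(\pi/(t+2)) \approx \exp(-n\pi^2/(2(t+2)^2))$ is valid to the required order. In the concentrated regime $a_n \to 0$, the exponent $f$ must further be tracked with additive $\bar{o}(1)$ precision; any crude approximation that ignores the $+2$ in $\cos(\pi/(t+2))$ would produce an incorrect centering at $T_n^*$ rather than $T_n^* - 2$, and thus a wrong statement in the discrete regime.
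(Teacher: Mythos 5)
Your proposal follows essentially the same route as the paper: sharp confinement/gambler's-ruin estimates for $\PP(E_x^y(n))$ obtained by inclusion--exclusion and a one-eigenvalue spectral expansion, a Laplace-type expansion of the exponent $h_n(t+1)+n\,g(t+2)$ around its minimizer $T_n^*-2$ with curvature $1/a_n^2$, factorization of the $(t,w)$-sum yielding the independent cosine law for $W_n/T_n^*$, and the dichotomy $a_n\to\infty$ (Gaussian fluctuations, shift by $2$ negligible) versus $a_n\to0$ (concentration on the integer(s) nearest to $T_n^*-2$, i.e.\ the set $\mathcal{A}_n$), which is exactly the content of Lemmas~\ref{lem:local} and~\ref{lem:local0} combined with Proposition~\ref{prop:cv-loi}. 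The only point you leave implicit is the a priori restriction to $|T_n-T_n^*|\leq \gep_n T_n^*$ before the saddle-point analysis --- the paper imports this from \cite{berger2020one} (or a large-deviation bound), and it is genuinely needed since both the quadratic expansion and the ruin asymptotics are only valid near the saddle (the latter requiring $n/T^2\to\infty$, which is what the hypothesis $h_n\geq n^{-1/2}(\log n)^{3/2}$ guarantees, rather than ``$n/t^2$ of order $1$'' as you write).
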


\begin{remark}
The term $a_n = \frac{1}{\sqrt{3n\pi^2}}(T_n^*)^2$ in Theorem~\ref{th-limite} arises naturally as a Taylor expansion coefficient in the exponential part of the partition function after injecting gambler's ruin formulae, see Section~\ref{sec:heuristics} below.
\par The assumption that $h_n \geq n^{-1/2} (\log n)^{3/2}$ is due to technicalities in the proof of Theorem \ref{prop equiv proba} below and gambler's ruin formulae.
\end{remark}

\begin{theorem}
\label{th-equiv}
We have the following exact asymptotics:

\textbullet\  Assume that $h_n \geq n^{-1/2} (\log n)^{3/2}$ and $\lim\limits_{n\to\infty} n^{-1/4} h_n = 0$; in other words, $\gamma \in(-\frac12,\frac14)$ in~\eqref{def:hn}. Then, as $n\to\infty$, 
\[ Z_{n,h_n} =(1+\bar{o}(1))  \frac{16\sqrt{2}}{\sqrt{3\pi}} \Big(\frac{\cosh(h_n)-1}{h_n}\Big) \sqrt{n}  \,\exp\Big( -\frac32 h_n T_n^* \Big)\,. \]

\smallskip
\textbullet\  Assume that $\lim\limits_{n\to\infty} n^{-1/4}  h_n =+\infty$ and $\lim\limits_{n\to\infty} n^{-1} h_n = 0$; in other words, $\gamma \in (\frac14,1)$ in~\eqref{def:hn}. Denote by $t_n^o$ the decimal part of $T_n^* - 2$. Then, as $n \to \infty$,
\[ Z_{n,h_n} = \frac{16}{\pi^{4/3}} (1 + \indic{t_n^o = \frac12}) \Big(\frac{\cosh(h_n)-1}{h_n^{1/3}}\Big) n^{1/3}  \,\exp\Big( -\frac32 h_n T_n^* - \Phi_n(\indic{t_n^o \geq \frac12 + \frac{1}{T_n^o}}) \frac{\pi n}{(T_n^*)^4} (1+\bar{o}(1)) \Big)\,. \]
with $\Phi_n(t) \defeq 6 + \frac{\pi^2}{12} + \frac32 \Big[\frac{1}{T_n^o}|t - t_n^o| + (t - t_n^o)^2 \Big]$. Note that $\lim\limits_{n\to\infty} \frac{n}{(T_n^*)^4} = +\infty$ means $\bar o(\frac{n}{(T_n^*)^4})$ could still diverge.
\end{theorem}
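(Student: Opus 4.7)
My plan carries out the proof in three steps based on the gambler's ruin input from Theorem~\ref{prop equiv proba}. First, I would decompose $Z_{n,h_n}$ by the extremes $m = \min_k S_k$ and $M = \max_k S_k$. Since $|\mathcal R_n| = M - m + 1$ with $m \leq 0 \leq M$, and since $\PP(\min = m, \max = M)$ can be written via inclusion-exclusion as a fourfold forward difference of $f(m,M) := \PP(\min \geq m, \max \leq M)$, a routine re-summation using discrete translation invariance collects the shifted sums into the prefactor $e^{-h_n}(1 - e^{-h_n})^2 = 2 e^{-2h_n}(\cosh h_n - 1)$, giving
\[
Z_{n,h_n} = 2 e^{-2h_n}(\cosh h_n - 1)\sum_{m\leq 0\leq M} e^{-h_n(M-m)}\,f(m,M).
\]
This identity transparently accounts for the $(\cosh h_n - 1)$ factor appearing in both cases of the theorem.

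Second, I would substitute the Dirichlet spectral formula for $f(m,M)$ supplied by Theorem~\ref{prop equiv proba}: with $N := M-m+2$,
\[
f(m,M) = \frac{2}{N}\sum_{k\,\mathrm{odd}} \sin\!\Bigl(\tfrac{k\pi(-m+1)}{N}\Bigr)\cot\!\Bigl(\tfrac{k\pi}{2N}\Bigr)\cos^n\!\Bigl(\tfrac{k\pi}{N}\Bigr).
\]
Summing over $m \in [-T,0]$ at fixed $T = M-m$ produces a second cotangent factor; higher harmonics ($k \geq 3$) are exponentially suppressed since $\cos^n(3\pi/u)/\cos^n(\pi/u) \asymp e^{-4 h_n T_n^*}$, so only $k=1$ survives. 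With $u := T+2$, this yields
\[
Z_{n,h_n} \sim \frac{16(\cosh h_n - 1)}{\pi^2}\sum_{u \geq 2} u\,e^{-h_n u}\cos^n(\pi/u)\bigl(1 + \grdO(u^{-2})\bigr).
\]

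Third, I apply Laplace's method to $\psi_n(u) := -h_n u + n\log\cos(\pi/u)$, expanding around its minimum. The identity $h_n = n\pi^2/(T_n^*)^3$ cancels the linear term at $u = T_n^*$, yielding
\[
\psi_n(T_n^* + s) = -\tfrac{3}{2} h_n T_n^* - \tfrac{\pi^2 h_n}{12\,T_n^*} - \frac{s^2}{2 a_n^2} + \mathrm{h.o.t.},
\]
whose quadratic coefficient matches $a_n^{-2}$ exactly. In regime~1 ($a_n \to \infty$) the integer sum in $u$ becomes a Riemann sum approximating a Gaussian integral of weight $\sqrt{2\pi}\,a_n$, the correction $\pi^2 h_n/(12 T_n^*) \to 0$, and combining the $u \sim T_n^*$ prefactor with the identity $T_n^* a_n = \pi\sqrt{n}/(\sqrt{3}\,h_n)$ yields precisely the first formula. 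In regime~2 ($a_n \to 0$) only the one (or two, if $t_n^o = 1/2$) integer(s) $u$ closest to the true continuous minimizer $u^\star = T_n^* + \pi^2/(9 T_n^*) + \bar o(1/T_n^*)$ contribute---namely $\lfloor T_n^*-2\rfloor+2$ and/or $\lceil T_n^*-2\rceil+2$, producing the $(1 + \indic{t_n^o = 1/2})$ factor; the integer remainder $t - t_n^o$ feeds the quadratic and (through the minimizer shift) the linear $|t - t_n^o|/T_n^o$ parts of $\Phi_n$, while the quartic correction of $\log\cos$, the expansion $\cot^2(\pi/(2N)) = 4N^2/\pi^2 - 2/3 + \grdO(N^{-2})$, and other subleading prefactor corrections combine to produce the constant $6 + \pi^2/12$.

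The main obstacle is the regime~2 analysis: since $a_n \to 0$ but $h_n/T_n^* \to \infty$, no $O(1)$ error in the exponent can be tolerated, so every subleading correction (cotangent expansion, quartic of $\log\cos$, shift of the real minimizer, and sub-polynomial prefactor terms) must be tracked explicitly and shown to combine into the exact form of $\Phi_n$. One also needs uniform tail control for $|u - T_n^*|$ large, which should follow from the moderate-deviations estimates inherent in Theorem~\ref{prop equiv proba}.
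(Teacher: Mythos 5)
Your route is genuinely different from the paper's. The paper never re-sums by parts: it derives sharp asymptotics for $\PP(E_x^y(n))$ itself (Theorem~\ref{prop equiv proba}, where the cancellations of the fourfold difference are handled case by case), converts them into local estimates for $Z_{n,h_n}(T_n=\cdot\,,W_n=\cdot)$ (Lemmas~\ref{lem:local} and~\ref{lem:local0}), and then sums (Riemann sums in $w$ and $t$ in the weak regime, extraction of the dominant integer(s) in the strong regime). Your Abel-summation identity, which transfers the differencing onto the weight and leaves only strict-confinement probabilities $f(m,M)$, combined with the exact eigenfunction expansion and a one-dimensional Laplace analysis of $u\mapsto h_nu+ng(u)$, bypasses those cancellations entirely; it correctly reproduces the weak-regime constant (indeed $a_nT_n^*=\frac{\pi}{\sqrt3}\frac{\sqrt n}{h_n}$, as in the paper) and is arguably cleaner for Theorem~\ref{th-equiv} alone, though it does not by itself give the joint law with $W_n$ that the paper also wants. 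Two caveats: the spectral formula for $f(m,M)$ is \emph{not} supplied by Theorem~\ref{prop equiv proba} (that theorem concerns $E_x^y(n)$); it must be derived from the Feller-type formulas of Section~\ref{sec-ruine} or cited, with uniform error control. Moreover the harmonics with $k$ close to $N$ have $|\cos(k\pi/N)|$ comparable to $\cos(\pi/N)$ and are only \emph{polynomially} suppressed, through the factor $\cot^2(k\pi/2N)=\grdO(N^{-2})$, so the reduction to $k=1$ needs that extra observation (plus a separate treatment of small $u$), not just the exponential bound you quote for $k=3$.

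The genuine gap is in the strong regime, at precisely the delicate point. Your own expansion yields the exponential correction $\frac{\pi^2h_n}{12T_n^*}=\frac{\pi^2}{12}\cdot\frac{\pi^2n}{(T_n^*)^4}$ together with the quadratic term $\frac{(t-t_n^o)^2}{2a_n^2}=\frac32(t-t_n^o)^2\,\frac{\pi^2n}{(T_n^*)^4}$, and nothing else of that order; the mechanism you invoke to produce the additive constant $6$ in $\Phi_n$ cannot work. The $-2/3$ in $\cot^2(\pi/(2N))=4N^2/\pi^2-2/3+\grdO(N^{-2})$ and all other subleading prefactor corrections change $\log Z_{n,h_n}$ by $\grdO(1)$ (in fact $\grdO((T_n^*)^{-2})$), whereas the term in question is of order $n/(T_n^*)^4\to\infty$; a bounded multiplicative error can be absorbed into the $(1+\bar o(1))$ inside the exponent, but a different constant in front of a divergent term cannot. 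In the paper, the $6$ appears when the exponent is rewritten around $-\frac32h_nT_n^*$ via the expansion of $g(T_n^*+2)n$ (the $6\pi^2n/(T_n^*)^4$ cross term of $\frac{n\pi^2}{2(T_n^*+2)^2}$), i.e.\ it is a matter of explicit bookkeeping between the reference exponent of Lemma~\ref{lem:local0} and the stated formula --- not of prefactor terms. As it stands, your Laplace expansion and the stated result disagree on the coefficient (and on $\pi$ versus $\pi^2$) of a divergent correction; you must either carry out that bookkeeping and reconcile your exponent with the statement, or explicitly flag the discrepancy, rather than assert that unspecified subleading corrections ``combine to produce $6+\pi^2/12$''. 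The same remark applies to the linear part $\frac{1}{T_n^o}|t-t_n^o|$ of $\Phi_n$: it comes from the $\grdO(n/(T_n^*)^5)$ terms (minimizer shift and the next order of $g$), and in a regime where no $\grdO(1)$ exponent error is tolerated it must be exhibited with its coefficient, not merely alluded to.
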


For the sake of completeness, we add the following result concerning the critical case $\lim\limits_{n\to\infty} n^{-1/4}  h_n =\hat h \in (0,+\infty)$. We write $\varsigma_n(t) \defeq \frac{1}{T_n^o}|t- t_n^o| \indic{t \in \mathset{0,1}} + (t- t_n^o)^2$

\begin{proposal}\label{prop-1/4}
Suppose that $\lim\limits_{n\to\infty} n^{-1/4}  h_n = \hat{h} \in (0,+\infty)$, so in particular we have $\lim_{n\to\infty} a_n = \frac{\pi^{1/3}}{\sqrt{3}\hat{h}^{2/3}} \eqdef a$. Then, as $n\to\infty$, we have
\[ Z_{n,h_n} =(1+\bar{o}(1)) \frac{16}{\pi^{4/3}} \Big(\frac{\cosh(h_n)-1}{h_n^{1/3}}\Big) n^{1/3}  e^{-\frac32 h_n T_n^*} \theta_n(a), \quad \text{with $\theta_n(a) \defeq \sum\limits_{t = -\infty}^{+\infty} e^{-\frac{\varsigma_n(t)}{2a^2}}$}\]
 (recall that $\delta_n = \indic{t_n^o \geq \frac12} - t_n^o$ with $t_n^o$ the decimal part of~$T_n^* - 2$).
Furthermore, for any integers $r \leq s$, as $n\to\infty$ we have
\[ \PP_{n,h_n} \left( r \leq T_n - \lfloor T_n^*-2 \rfloor \leq s \right) =(1+\bar{o}(1)) \frac{1}{\theta_n(a)} \sum_{t = r}^{s} e^{-\frac{\varsigma_n(t)}{2a^2}} \,. \]
\end{proposal}

\subsection{Range's endpoints and confinement estimates}

\par Let us now state some estimates for the probability
that the range of a random walk is exactly a given interval.
The proof is postponed to Section~\ref{sec-ruine} 
and follows from gambler's ruin estimates that can be found in \cite[Chap. XIV]{Feller}.

Let $x,y$ be two non-negative integers and denote by $E_x^y(n)$ the following event
\[
E_x^y(n) \defeq \left\{ \mathcal R_n = \llbracket -x,y\rrbracket \right\} = \left\{ M_n^-  = -x \, , \, M_n^+ = y \right\} \,,
\]
where we also introduced $M_n^- \defeq \min_{k \leq n} S_k$,
$M_n^+ \defeq \max_{k \leq n} S_k$,
and used the standard notation $\llbracket a,b\rrbracket = [a,b]\cap \mathbb Z$.
We also define the following function $g$,
that encodes the exponential decay rate of confinement probabilities
inside a strip: 
\begin{equation}
\label{g DL}
    g(T)  \defeq - \log \cos\left( \frac{\pi}{T} \right) = \frac{\pi^2}{2T^2} + \frac{\pi^4}{12T^4} + \grdO(T^{-6}) \quad \text{ as }T\to\infty.
\end{equation} 
The main result used in the rest of the paper is the following. It is based on sharp gambler's ruin estimates, see Lemmas~\ref{lemme-ruine}-\ref{lemme conf strict} in Section \ref{sec-ruine}.

\begin{theorem}\label{prop equiv proba}
We have the following convergence for any positive $T = T(n)$
\begin{equation}\label{eq-equiv-proba}
    \varlimsup_{n \to \infty} \sup_{\substack{x,y \in \NN\\ x+y=T}} \left| \frac{\proba{E_x^y(n)}}{\Theta_n(x,y)} - 1 \right| = 0
\end{equation}
Where we defined the function $\Theta_n(x,y)$ for $x+y = T$ as
 \[ \Theta_n(x,y) \defeq \begin{dcases}
    \frac{4}{\pi} \sin \left( \frac{\pi (x+1) }{T} \right)  e^{-g(T+2)n} & \text{ if } \frac{n}{T^3} \to +\infty\\
    \frac{4}{\pi}  (e^{\alpha \pi^2}-1)\left[ e^{\alpha \pi^2}\sin \left( \frac{\pi(x+1)}{T} \right)  -  \sin \left( \frac{\pi x}{T} \right) \right] e^{-g(T)n} & \text{ if } \frac{n}{T^3} \to \alpha \in (0,+\infty) \\
    \frac{4 \pi^3n^2}{T^6} \left[ \sin\left( \frac{\pi x}{T} \right)  + \frac{T^2}{\pi n}  \right] e^{-g(T)n}  & \text{ if } \frac{n}{T^3} \to 0
   \end{dcases} \]
\end{theorem}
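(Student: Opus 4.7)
The plan is to reduce $\proba{E_x^y(n)}$ to a four-term alternating sum of confinement probabilities and then inject the sharp gambler's ruin estimates of Lemmas~\ref{lemme-ruine}--\ref{lemme conf strict}. By inclusion--exclusion,
\[
\proba{E_x^y(n)} \,=\, V(-x,y) - V(-x+1,y) - V(-x,y-1) + V(-x+1,y-1),
\]
where $V(a,b)\defeq\proba{a \leq S_k \leq b\ \forall k \leq n}$. The four strips appearing here have widths $T+1$, $T$, $T$ and $T-1$, hence slightly different spectral decay rates. I would then invoke the standard spectral representation of the walk killed upon exiting $\llbracket a,b\rrbracket$ (absorbing barriers at $a-1$ and $b+1$):
\[
V(a,b) \,=\, \frac{2}{L+2}\sumtwo{1\leq k\leq L+1}{k\text{ odd}} \sin\!\Big(\tfrac{k\pi(1-a)}{L+2}\Big)\cot\!\Big(\tfrac{k\pi}{2(L+2)}\Big)\cos^n\!\Big(\tfrac{k\pi}{L+2}\Big), \quad L\defeq b-a,
\]
which is essentially the content of the announced lemmas.

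In the long-time regime $n/T^3\to\infty$, only the principal mode $k=1$ survives and, among the four strips, the largest one dominates: the other three decay faster by at least $g(T+1)-g(T+2)\asymp 1/T^3$, hence are negligible by a factor $e^{-cn/T^3}\to 0$. Using $\cot(\pi/(2(T+2)))\sim 2(T+2)/\pi$ and $\sin(\pi(x+1)/(T+2))=\sin(\pi(x+1)/T)(1+o(1))$ in the bulk, the announced equivalent follows. The intermediate regime $n/T^3\to\alpha\in(0,\infty)$ proceeds in the same spirit: only finitely many modes contribute and the four exponentials $e^{-g(\cdot)n}$ differ by a bounded multiplicative factor, so a direct simplification of the spectral sums on the four strips produces the combination of shifted sines displayed in the statement.

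The short-time regime $n/T^3\to 0$ is the delicate one: infinitely many eigenmodes contribute since $\cos^n(k\pi/(T+2))$ stays close to $1$ up to $k\asymp T/\sqrt{n}$, so no single term dominates. The key mechanism is that the leading-order Taylor contributions of the four sines, expanded in the tiny shifts $1/(T+2),1/(T+1),1/T$, cancel exactly in the inclusion--exclusion; the first surviving contribution is of order $n^2/T^6$ and produces both the prefactor and the additive $T^2/(\pi n)$ correction inside the brackets.

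The main obstacle I anticipate is obtaining all estimates \emph{uniformly} in $(x,y)$ with $x+y=T$, especially in the short-time regime and near the endpoints $x\in\{0,T\}$ where the sine prefactor degenerates. This requires a careful matching of orders in the Taylor remainders for the sines, the cotangents and the eigenvalues, together with an explicit resummation identity to extract the short-time prefactor cleanly. Once this is in hand, the exponential separation of decay rates renders the other two regimes essentially routine.
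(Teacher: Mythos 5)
Your plan is the same as the paper's: the four-term inclusion--exclusion you write is exactly \eqref{Exy:difference} after recentering the strip at $0$ (with widths $T+2,T+1,T+1,T$ between the forbidden sites), and your exact spectral sum for $V(a,b)$ is precisely what Lemmas~\ref{lemme-ruine}--\ref{lemme conf strict} encode, reduced to the mode $k=1$ with relative error $\grdO(e^{-\pi^2 n/T^2})$. Your treatment of the regimes $\frac{n}{T^3}\to\infty$ and $\frac{n}{T^3}\to\alpha$ matches the paper and is fine (note that in the intermediate regime it is not ``finitely many modes'' but again only $k=1$ that contributes, since $n/T^2\to\infty$ there automatically).

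The gap is in the regime $\frac{n}{T^3}\to 0$, which is the heart of the theorem. First, your assertion that ``infinitely many eigenmodes contribute since $\cos^n(k\pi/(T+2))$ stays close to $1$ up to $k\asymp T/\sqrt n$'' misidentifies the regime: the result is proved (and used) under $n\geq \tfrac14 T^2\log T$ (this is what the hypothesis $h_n\geq n^{-1/2}(\log n)^{3/2}$ guarantees), so that every mode $k\geq 2$ is suppressed by $e^{-c n/T^2}\to 0$ and only $k=1$ matters in all three regimes; if you genuinely allowed $n\lesssim T^2$, the stated third asymptotic would simply be false, so your plan must pin down this restriction before anything else. Second, and more importantly, the surviving contribution after the cancellation is \emph{not} obtained by expanding the four sines in the shifts $1/T,1/(T+1),1/(T+2)$: that pure second difference of sines (the paper's term $A$, of order $\frac{v^2}{T^4}\cos(\frac{\pi v}{T})+\frac{v}{T^3}\sin(\frac{\pi v}{T})$ with $v=\frac T2 -x$) is shown to be \emph{negligible}. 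The displayed $\frac{4\pi^3 n^2}{T^6}\sin(\frac{\pi x}{T})$ comes from the second-order expansion of the exponential ratios $e^{-(g(T)-g(T+1))n}=1-\frac{\pi^2 n}{T^3}+(1+\bar o(1))\frac{\pi^4 n^2}{2T^6}$ and $e^{-(g(T+2)-g(T+1))n}=1+\frac{\pi^2 n}{T^3}+(1+\bar o(1))\frac{\pi^4 n^2}{2T^6}$, while the additive $\frac{T^2}{\pi n}$ inside the bracket is the cross term (first-order exponential) $\times$ (first-order sine difference), i.e.\ the paper's term $B=(1+\bar o(1))\frac{n\pi^3 v}{T^5}\sin(\frac{\pi v}{T})$, of order $\frac{n}{T^4}$, which dominates only when $x/T\to 0$. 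Taken literally, your mechanism produces only term $A$ and hence neither of the two terms in the stated $\Theta_n$; the joint expansion of eigenvalue powers and sine prefactors, together with the proof that $A\ll B$ uniformly in $x$, is exactly the delicate computation missing from your proposal. Finally, uniformity at the endpoint is not merely a remainder-matching issue: for $x=0$ the decomposition degenerates to two terms and the third-regime asymptotic genuinely changes (see the remark following the theorem), so that case must be treated separately.
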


\begin{remark}
For the rest of the paper we will prefer to write $\proba{E_x^y(n)} = (1+\bar{o}(1)) \Theta_n(x,y)$ with $\bar{o}(1)$ uniform in $x,y$ and only depending on $T=x+y$, in the sense of \eqref{eq-equiv-proba}.
\end{remark} 

We can summarize these results in a more compact way,
if we agree to lose some precision in the case where $x$ is close to $0$ (more precisely if $x/T\to 0$):
\begin{equation}
\label{eq:probasimple}
\proba{E_x^y(n)} = \psi\Big( \frac{n \pi^2}{T^3}\Big)  \left[ \sin\left( \frac{\pi x}{T} \right) + \bar{o}(1) \right] e^{-g(T+2)n}, \quad \text{ with } \psi(\alpha) \defeq \frac{4}{\pi} ( 1-  e^{- \alpha})^2.
\end{equation}
Here, the $\bar{o}(1)$ is uniform in $x,y$ and depends only on $T=x+y$.
To get~\eqref{eq:probasimple}, we have used in particular
that $g(T+2)n-g(T)n \sim \frac{2\pi^2 n}{T^3}$ as $T\to\infty$,
which converges to $2\alpha \pi^2$ if $\lim_{n\to\infty} \frac{n}{T^3} = \alpha \in [0,+\infty)$.

\begin{remark}
Whenever $x=0$ (or $y=0$ using symmetry) we have the same theorem applied to $x = 0$ (see Section \ref{Exy positive RW}), except when $\frac{n}{T^3} \to 0$ in which case
\[ \proba{E_0^T(n)} =(1+\bar{o}(1)) \frac{4n\pi}{T^3}  \sin \left( \frac{\pi}{T+2} \right) e^{-g(T+1)n} \,. \]
This will not be significant starting from Section \ref{sec-part} as it only consists of two not-so-peculiar range configurations among the \textit{many} configurations in the partition function.
\end{remark}

Let us stress that one easily deduces from Theorem \ref{prop equiv proba} the following statement, leading to the asymptotic independence in Theorem~\ref{th-limite}, as well as the convergence in distribution of~$\frac{W_n}{T_n^*}$ to $\mathcal{W}$.

\begin{proposal}\label{prop:cv-loi}
Let $(t_n)_{n\geq 1}$ be any sequence of integers such that $\lim_{n\to\infty}t_n=\infty$ and $\frac14 t_n^2 \log t_n \leq n$.
Then, conditioning on $T_n = t_n$,
$\frac{W_n}{t_n}$ converges in distribution to~$\mathcal{W}$. 
More precisely, we have the following local limit convergence: uniformly for $w$ such that $2w \in \llbracket -t_n, t_n\rrbracket$,
\[
\proba{W_n = w \, | \, T_n = t_n} = \frac{\pi}{2} \left[ \cos \left( \frac{w\pi}{t_n} \right) +\bar{o}(1) \right] \,\quad \text{as }n\to\infty.
\]     
\end{proposal}

\noindent
Note that this proposition allows us to focus our study on $T_n$ instead of $(T_n,W_n)$.

\begin{proof}
For $-\frac12 \leq a \leq b \leq \frac12$, we get thanks to~\eqref{eq:probasimple} that
\begin{multline*}
    \proba{a \leq \frac{W_n}{t_n} \leq b \, ; T_n = t_n} = \sum_{\substack{x+y=t_n\\ 2 at_n \leq y-x \leq 2 b t_n}} \proba{E_x^y(n)} \\
     = \frac{4}{\pi} \Big( 1-e^{-\frac{n\pi^2}{t_n^3}}\Big)^2 e^{-g(t_n+2)n} \sum_{2at_n \leq 2w \leq 2bt_n} \left[ \cos \left( \frac{w\pi}{t_n} \right) + \bar{o}(1)\right] \,,
\end{multline*}
where we have set $w = w(x,y) \defeq \frac{y-x}{2}$.
Similarly,
\[ \proba{T_n = t_n} =\frac{4}{\pi} \Big(1-e^{-\frac{n\pi^2}{t_n^3}}\Big)^2 e^{-g(t_n+2)n} \sum_{-t_n\leq 2w \leq  t_n} \left[ \cos \left( \frac{w\pi}{t_n} \right) + \bar{o}(1)\right] \,. \]
We therefore end up with
\[
    \proba{a \leq \frac{W_n}{t_n} \leq b \, \Big| \, T_n = t_n} = \frac{\sum_{at_n \leq w \leq bt_n} \left[ \cos \left( \frac{w\pi}{t_n} \right) + \bar{o}(1)\right]}{\sum_{-t_n \leq 2w \leq t_n} \left[ \cos \left( \frac{w\pi}{t_n} \right) + \bar{o}(1)\right]} \xrightarrow[n\to \infty]{} \frac{\pi}{2} \int_a^b \cos(\pi u) \, \dd u \,,
 \]
and taking $a=b=w/t_n$,
\[\proba{W_n = w \, | \, T_n = t_n} = \frac{\cos \left( \frac{w\pi}{t_n} \right) + \bar{o}(1)}{\sum_{-t_n \leq 2w \leq t_n} \left[ \cos \left( \frac{w\pi}{t_n} \right) + \bar{o}(1)\right]} =  \frac{\pi}{2} \left[ \cos \left( \frac{w\pi}{t_n} \right) +\bar{o}(1) \right]\,. \qedhere\]
\end{proof}

\subsection{Some heuristics}
\label{sec:heuristics}

Let us present some heuristics for obtaining the asymptotics of the partition function,
and explain how the quantities $T_n^*$ and $a_n$ (recall~\eqref{def:Tnan}) appear.
We can decompose the partition function as
\[
Z_{n,h_n}  = \sum_{x,y\geq 0} e^{-h_n(T+1)} \proba{E_x^y(n)}  \, ,
\]
where we have set $T=T(x,y) = x+y$.
In view of Theorem~\ref{prop equiv proba},
we have $\proba{E_x^y(n)} = u_n(x,y) e^{ -g(T)n}$
with  $g(T) = (1+\bar{o}(1)) \frac{\pi^2}{2T^2}$.
Hence, the main contribution to the sum will come from $x,y$
with $T$ that is close to minimizing 
the function
\begin{equation}\label{eq:def-phin}
\phi_n(T) \defeq h_n T + \frac{n \pi^2}{2T^2} \,.
\end{equation}
Then, notice that $\phi_n$ is minimal at $T=  T_n^{*} \defeq \big( \frac{n \pi^2}{h_n}\big)^{1/3}$ (recall~\eqref{def:Tnan})
and that  $\phi_n(T_n^{*}) = \frac{3 \pi^{1/3}}{2} n^{1/3} h_n^{2/3} = \frac32 h_n T_n^{*}$.

Let us now factorize $e^{\phi_n(T_n^{*})}$ (and $e^{h_n}$) in the sum above, to get that
\[
 e^{\frac32 h_n T_n^{*}} e^{h_n} Z_{n,h_n} 
 \approx  \sum_{x,y\geq 0} u_n(x,y) \exp\big( - (\phi_n(T)-\phi_n(T_n^{*}) ) \big)  \,.
\]
Now, since $\phi_n'(T_n^*) =0$,
we have
$\phi_n(T) \approx \phi_n(T_n^{*}) + (T-T_n^{*})^2 \phi_n''(T_n^{*})$,
with $\phi_n''(T_n^{*}) =  \frac{3n \pi^2}{(T_n^{*})^4} = \frac{1}{a_n^2}$ (recall~\eqref{def:Tnan}).
In the sum above the main contribution therefore comes from 
values of~$T$ that are such that $\phi_n(T)-\phi_n(T_n^{*})$ is at most of order $1$,
that is with $T-T_n^{*} = \grdO( a_n)$.

\smallskip
Let us stress once more that if $\lim\limits_{n\to\infty} n^{-1/4} h_n = 0$ then $\lim\limits_{n\to\infty} a_n = +\infty$, whereas if $\lim\limits_{n\to\infty} n^{-1/4} h_n = +\infty$ then $\lim\limits_{n\to\infty} a_n = 0$.
The condition $h_n \geq n^{-1/2} (\log n)^{3/2}$ ensures that $\frac14 (T_n^*)^2 \log T_n^* \leq n$ and the condition $\lim\limits_{n\to\infty} n^{-1}h_n =0$ ensures that $\lim\limits_{n\to\infty} T_n^* = +\infty$.

\subsection{Further comments on the results}

Theorem \ref{th-limite} states that asymptotically, the polymer behaves as a random walk whose range's size $T_n$ fluctuates around the optimal $T_n^* = \big( \frac{n \pi^2}{h_n}\big)^{1/3}$.
If $h_n n^{-1/4} \to 0$ (weak penalization), then the fluctuations are Gaussian at a scale $a_n =\frac{1}{\sqrt{3}} \big(\frac{n}{h_n^4} \big)^{1/6}$.
On the other hand, if $h_n n^{-1/4} \to \infty$ (strong penalization), then the fluctuations vanish and $T_n$ is equal to either $\lfloor T_n^* \rfloor - 2$ or $\lfloor T_n^*\rfloor -1$.
In both cases, the relative position of the center of the range is asymptotically independent of its size, with distribution given by the density $\frac{\pi}{2} \cos(\pi u) \mathbbm{1}_{[-\frac12,\frac12]}(u)$, conjectured or discussed in previous works (see \cite[Theorem 8.3]{den2009random} for example) but with no concrete proof (to the best of our knowledge).

\subsubsection{Continuous analogue of the model}

One can easily see the similarities between this polymer model and the study of the Brownian motion penalized by the amplitude of its trajectory.
For a Brownian motion $\beta$, define $|C_T| \defeq |\mathset{\beta_t \, : \, t \leq T}|$ its amplitude at time $T$ (here $|\cdot|$ is the Lebesgue measure). Donsker and Varadhan proved in \cite{saucisse} that
\[ \lim_{T \to \infty} \frac{1}{T^{1/3}} \log \esp{e^{-\nu |C_T|}} = -\frac{3}{2} (\nu \pi)^{2/3} \,. \]
\par Schmock later expanded on this result in \cite{schmock1990} and obtained that the associated Gibbs measures $\PP_{T,\nu}(\dd \omega) = e^{-\nu |C_T|}\mathbb{W}(\dd \omega)$  (with $\mathbb{W}$ the Wiener measure) converge weakly to a measure $\PP_{\infty, \nu}$ given by
\[ \PP_{\infty,\nu} (A) = \int_0^{c_\nu} \frac{\pi}{2 c_\nu} \sin \left( \frac{\pi u}{c_\nu} \right) P_{u-c_\nu, u}(A) \, \dd u \,, \]
with $c_\nu = (\pi^2/\nu)^{1/3}$, where $P_{u-c_\nu, u}$ denotes the path measure of a Brownian taboo process with taboo set $\mathset{u-c_\nu, u}$.
In other words, $\PP_{\infty, \nu}$ is a mixture of taboo processes $P_{u-c_\nu, u}$, which correspond to the actual diffusion process conditioned to stay in an interval of length $c_{\nu}$ and upper edge $u$; additionally, the mixing measure selecting the upper edge $u$ is identical to $\mathcal{W}$ in Theorem~\ref{th-limite} (if one selects the center of the range rather than the upper edge). This is therefore completely analogous to our Theorem~\ref{th-limite}.


However,  because there is no underlying lattice, the continuous case should not display a transition for the fluctuations at $\nu=\nu_T \asymp T^{1/4}$: when $\lim_{T\to\infty} T^{-1/4}\nu_T = +\infty$, fluctuations become $\bar{o}(1)$ but still remain Gaussian after a proper scaling.
Let us also stress that in the continuous case, well-known results such as Lévy triple law (see \cite[Theorem 6.18]{schilling2021brownian}) allow for relatively simple computations of the law of the endpoint $\beta_T$ for a large $T$ conditioning on the range's endpoints --- which Theorem~\ref{prop equiv proba} does not provide in our setting, we only get the position of the starting point relative to the range, see Proposition~\ref{prop:cv-loi}. Obtaining a result for the starting and endpoint for our model would require the joint law of $(M_n^-, M_n^+, S_n)$ or a study based on local times of the polymer, which are both beyond the scope of this paper.

\subsubsection{Other related models}

Related models for self-interacting polymers have been studied in the literature these past years. We mention here two of these models and their recent advancements.

\medskip
First, one can consider a disordered version of the random walk penalized by its range, \textit{i.e.}\ the case where the penalization by the range is perturbed by a random environment. Take a collection of i.i.d variables $(\omega_z)_{z \in \ZZ}$ and consider the random polymer measure
\[ \dd \PP_{n,h}^{\omega,\beta}(S) = \frac{1}{Z_{n,h}^{\beta,\omega}} \exp \Big( \sum_{z \in \mathcal{R}_n(S)} \big(\beta \omega_z - h\big) \Big) \dd \PP(S), \]
in particular $\PP_{n,h} = \PP_{n,h}^{\omega,0}$. This quenched model was studied in~\cite{berger2020one, berger2021non, huang2019scaling}, for size-dependent parameters $h_n$ and $\beta_n$. 
In dimension $d=1$, \cite{berger2020one} finds a wide range of behaviors for the polymer depending on the sign and the growth speed of the parameters $h_n,\beta_n$. However, several questions remain open, such as determining the location and fluctuations of the range (in the spirit of Theorem \ref{th-limite}) in a regime where the range size (properly rescaled) converges to a non-random quantity --- we are currently investigating this question~\cite{bouchot2022}.

\medskip
Another related model is the charged polymer, where charges are attached to the different monomers and interact with each other, see \cite[Chapter 8]{den2009random} for an overview.
Take i.i.d.\ random variables $(\omega_k)_{k \in \NN}$, and consider the following quenched Gibbs measure on random walk trajectories
\[ \dd \PP_{n,\beta}^{\omega}(S) = \frac{1}{Z_{n,\beta}^{\omega}} \exp \Big( - \beta \sum_{1 \leq i < j \leq n} \omega_i \omega_j \indic{S_i = S_j} \Big) \dd \PP(S) \,. \]
\par Some recent papers~\cite{berger:hal-01576410,caravenna2016annealed,10.1007/978-981-15-0302-3_1} are dealing with the annealed version of the model, that can be written in the following form
\[ \dd \PP_{n,\beta}^{\mathrm{ann}}(S) = \frac{1}{Z_{n,\beta}^{\mathrm{ann}}} \exp \Big( - \sum_{x\in \ZZ^d} g_{\beta}(\ell_n(x)) \Big) \dd \PP(S) , \]
where $\ell_n(x) = \sum_{i=1}^n\indic{S_i=x}$ is the local time at site $x$ and where $g_{\beta}$ is a function that depends on $\beta$ and on the distribution of $\omega$.
This model has been shown to undergo a folding/unfolding phase transition, and the case of dimension $d=1$ has been investigated in remarkable detail in~\cite{caravenna2016annealed}.
Our model falls in the same class of models: it corresponds to using the function $h\indic{\ell_n(x) > 0}$ instead of the function $g_{\beta}(\ell_n(x))$; note that our model also displays a folding/unfolding transition when $h$ goes from positive to negative values.

\subsubsection*{Organization of the rest of the paper}

The rest of the paper is organized as follows:
\begin{itemize}
    \item In Section~\ref{sec:weak} we focus on the case of a ``weak'' penalization, that is $\lim\limits_{n\to\infty} n^{-1/4} h_n =0$: we give local asymptotic estimates for the partition function (Lemma~\ref{lem:local}), from which we deduce the first point of both Theorem~\ref{th-equiv} and Theorem~\ref{th-limite} (in that order).
    \item In Section~\ref{sec:strong} we treat the case of a ``strong'' penalization, that is $\liminf\limits_{n\to\infty} n^{-1/4} h_n >0$: we modify the arguments of Section~\ref{sec:weak} to provide local asymptotic estimates for the partition function (Lemma~\ref{lem:local}). From this, we deduce first the second point of Theorems~\ref{th-limite}-\ref{th-equiv}, \textit{i.e.}\ in the case $\lim_{n\to\infty}n^{-1/4} h_n=+\infty$, before we turn to the border case of Proposition~\ref{prop-1/4}, \textit{i.e.}\ $\lim\limits n^{-1/4}h_n =\hat h \in (0,+\infty)$.
    \item Finally, in Section~\ref{sec-ruine} we derive sharp gambler's ruin estimates (see Lemmas~\ref{lemme-ruine}-\ref{lemme conf strict}) and their consequences for the range of a random walk, that is we prove Theorem~\ref{prop equiv proba}.
\end{itemize}

\section{Weak penalization: the case \texorpdfstring{$\lim\limits_{n\to\infty} n^{-1/4}h_n=0$}{}}
\label{sec:weak}

\subsection{Local asymptotics for the partition function}

Our first preliminary result computes 
the contribution of the partition function 
from trajectories with a fixed size of the range $T_n$,
with $T_n= T_n^{*} +\bar{o}(T_n^{*})$.
Recall that
 $T_n^{*} \defeq \big( \frac{n \pi^2}{h_n}\big)^{1/3}$
and $a_n \defeq \frac{1}{\sqrt{3n\pi^2}} (T_n^{*})^2$.

\begin{lemma}
\label{lem:local}
Assume that $h_n \geq n^{-1/2} (\log n)^{3/2}$ 
and that $\lim_{n\to\infty} n^{-1/4} h_n=0$.
Let $(\gep_n)_{n\geq 1}$ be any vanishing sequence.
Then, for any $t\in \mathbb Z$ and $w\in \frac12\mathbb Z$ such that $|t|\leq \varepsilon_n T_n^*$ and $w -\frac12(\lfloor T_n^* \rfloor +t) \in 2 \mathbb Z$, we have
\begin{equation}
\label{part equiv fin}
    Z_{n,h_n}\Big( T_n = \lfloor T_n^* \rfloor + t  , W_n = w \Big) =  \psi_n  \times  \Big(\cos\Big( \frac{\pi w}{T_n^*} \Big) +\bar{o}(1) \Big) \times e^{- (1+\bar{o}(1)) \frac{ t^2 }{2 a_n^2}   } \,,
\end{equation}
where the $\bar{o}(1)$ only depends on $\gep_n$,
and where we have set:
\begin{equation}
\label{def:psin}
\psi_n \defeq \psi(h_n) \exp\Big( -h_n(T_n^*+1) - g(T_n^*+2) n  \Big) \,,
\quad \text{ with }\ \psi(\alpha) = \frac{4}{\pi} ( 1-  e^{- \alpha})^2\,.
\end{equation}
\end{lemma}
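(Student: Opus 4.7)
The plan is to translate the event on $(T_n, W_n)$ into an event on the range's endpoints, apply the sharp asymptotics of Theorem~\ref{prop equiv proba}, and then Taylor-expand around the optimal value $T_n^*$ in the spirit of Section~\ref{sec:heuristics}. First observe that, setting $x = \tfrac{T}{2} - w$ and $y = \tfrac{T}{2} + w$ with $T = \lfloor T_n^* \rfloor + t$, the event $\{T_n = T, W_n = w\}$ coincides with $E_x^y(n)$, so
\[
Z_{n,h_n}\!\bigl(T_n = T,\, W_n = w\bigr) = e^{-h_n(T+1)}\, \proba{E_x^y(n)}.
\]
The hypothesis $h_n \geq n^{-1/2}(\log n)^{3/2}$ gives $(T_n^*)^2 \log T_n^* \lesssim n$, so the uniform form \eqref{eq:probasimple} applies and yields $\proba{E_x^y(n)} = \psi(\tfrac{n\pi^2}{T^3})\bigl[\sin(\tfrac{\pi x}{T}) + \bar{o}(1)\bigr] e^{-g(T+2)n}$.

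Next I would dispatch the polynomial prefactors. Since $|t| \leq \gep_n T_n^*$ yields $T/T_n^* = 1 + \bar{o}(1)$, continuity of $\psi$ gives $\psi(\tfrac{n\pi^2}{T^3}) = \psi(h_n)(1+\bar{o}(1))$. The trigonometric factor is handled via $\sin(\tfrac{\pi x}{T}) = \cos(\tfrac{\pi w}{T})$ together with Lipschitz continuity of $\cos$ on $[-\tfrac{\pi}{2},\tfrac{\pi}{2}]$ and the estimate $\bigl|\tfrac{\pi w}{T} - \tfrac{\pi w}{T_n^*}\bigr| = O(|t|/T_n^*) = \bar{o}(1)$, producing $\sin(\tfrac{\pi x}{T}) = \cos(\tfrac{\pi w}{T_n^*}) + \bar{o}(1)$.

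The heart of the proof is the exponential. Set $s \defeq T - T_n^*$, so $|s| \leq |t|+1$. A second-order Taylor expansion of $g$ at $T_n^*+2$, using the expansion~\eqref{g DL}, gives $g'(T_n^*+2)n = -h_n + O(h_n/T_n^*)$ and $g''(T_n^*+2)n = (1+\bar{o}(1))/a_n^2$ (the latter exactly from $a_n^2 = (T_n^*)^4/(3n\pi^2)$). Thus the linear term cancels against $h_n s$ up to $O(h_n s/T_n^*)$, leaving
\[
h_n(T+1) + g(T+2)n = h_n(T_n^*+1) + g(T_n^*+2)n + (1+\bar{o}(1))\frac{s^2}{2a_n^2} + R_n,
\]
with $R_n = O(h_n s/T_n^*) + O(s^3 n/(T_n^*)^5)$. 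The weak-penalization hypothesis $n^{-1/4}h_n\to 0$ enters here via $a_n \to \infty$ and the identities $h_n/T_n^* = 1/(3a_n^2)$ and $n/(T_n^*)^4 = 1/(3\pi^2 a_n^2)$: these convert $R_n$ into $O(s/a_n^2) + (|s|/T_n^*)\cdot O(s^2/a_n^2)$, which an AM--GM split and the bound $|s|/T_n^* \leq \gep_n + o(1)$ reduce to $\bar{o}(s^2/a_n^2) + \bar{o}(1)$.

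Finally, $s^2 = (1+\bar{o}(1))t^2 + O(1)$ allows replacing $s^2$ by $t^2$, and the residual additive $\bar{o}(1)$ together with the multiplicative $(1+\bar{o}(1))$ coming from $\psi$ are absorbed into the $\bar{o}(1)$ sitting next to $\cos(\tfrac{\pi w}{T_n^*})$ (which is bounded by $1$), giving~\eqref{part equiv fin}. The main technical obstacle is the analysis of $R_n$: the linear-term cancellation is indispensable, since $h_n s$ alone is of the same order as the quadratic contribution $s^2/a_n^2$ and would destroy the Gaussian profile if not exactly compensated by $g'(T_n^*+2)n$.
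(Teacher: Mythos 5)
Your proposal is correct and follows essentially the same route as the paper: translate the event into $E_x^y(n)$, apply Theorem~\ref{prop equiv proba} through~\eqref{eq:probasimple}, and Taylor-expand the exponent around $T_n^*$, absorbing the linear and cubic error terms via $a_n\to\infty$ and the case split $|t|\leq a_n$ versus $|t|\geq a_n$. The only cosmetic difference is that you expand the full exponent $h_nT+g(T+2)n$ directly at $T_n^*$ (handling the imperfect $O(1/a_n^2)$ cancellation of the linear term), whereas the paper writes it as $\phi_n(T)+h_n+\tilde g(T)n$ and expands $\phi_n$ (exact critical point) and $\tilde g$ separately; both bookkeepings lead to the same $\psi_n$ of~\eqref{def:psin}.
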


\begin{proof}
We have
\[
Z_{n,h_n}\Big(T_n = \lfloor T_n^* \rfloor + t  , W_n = w   \Big)  = e^{-h_n(x+y+1)} \proba{E_x^y(n)}  \, ,
\]
with $x+y = \lfloor T_n^* \rfloor + t$ and
$\frac12(y-x) =  w$.
Thanks to~Theorem~\ref{prop equiv proba},
we can estimate this term.
Indeed,
for every $x,y$
such that $\lim_{n\to\infty} \frac{x+y}{T_n^*}=1$,
using~\eqref{eq:probasimple}
we have
\begin{equation}
\label{eq:local1}
\proba{E_x^y(n)}
= \psi\Big(\frac{n \pi^2 }{(T_n^*)^3} \Big) \,\Big(\sin\Big( \frac{\pi x}{T_n^*} \Big) +\bar{o}(1) \Big) e^{ - g(x+y+2) n}   \,,
\end{equation}
where the $\bar{o}(1)$ is uniform in $x,y$
and  $\psi(\alpha) = \frac{4}{\pi} ( 1-  e^{- \alpha})^2$.
Note that by the definition of $T_n^*$ we have $\frac{n \pi^2}{(T_n^*)^3} = h_n$.

Now, here we have that $x= \frac12 (\lfloor T_n^* \rfloor + t) -w$,
with $\frac{1}{2T_n^* } (\lfloor T_n^* \rfloor + t) \to \frac12$.
Hence, we can write 
$\sin( \frac{\pi x}{T_n^*}) = \cos(\frac{\pi w}{T_n^*} ) +\bar{o}(1)$ in~\eqref{eq:local1}.
Recall the definition \eqref{eq:def-phin} $\phi_n(T) = h_n T + \frac{n \pi^2}{2T^2}$ and write $h_n(T+1)+g(T+2) n = \varphi_n(T) + h_n+\tilde g(T) n$,
with $\tilde g(T) =g(T+2) -\frac{\pi^2}{2T^2}$,
to get that
\begin{equation}
\label{eq:localZ}
\begin{split}
Z_{n,h_n}&\Big(T_n = \lfloor T_n^* \rfloor + t  , W_n = w   \Big) \\
& = \psi(h_n) \,\Big(\cos\Big( \frac{\pi w}{T_n^*} \Big) + \bar{o}(1) \Big) \exp\Big( -\phi_n(\lfloor T_n^* \rfloor + t) - h_n-\tilde g(\lfloor T_n^* \rfloor + t) n \Big)\,.
\end{split}
\end{equation}

We can use that
$\phi_n'(T_n^*) =0$,
$\phi_n''(T_n^*) = \frac{3n \pi^2}{(T_n^{*})^4} = 1/a_n^2$
and $\phi_n'''(T) = - \frac{12 n \pi^2}{T^5}$
to get that
 for any $\frac12  T_n^* \leq T\leq 2 T_n^*$
\[
\bigg|\varphi_n(T) - \varphi_n(T_n^*) - \frac{(T-T_n^*)^2 }{2 a_n^2} \bigg| \leq C \frac{|T-T_n^*|^3 n}{(T_n^*)^5} = C' \frac{|T-T_n^*|^3}{T_n^*\, a_n^2} \,.
\]
Hence, using that $a_n\to\infty$,
we get that 
\begin{equation}
\label{phiapprox}
\phi_n(\lfloor T_n^* \rfloor + t) = \varphi_n(T_n^*) + (1+\bar{o}(1)) \frac{t^2}{2 a_n^2} \,.
\end{equation}

We can also perform the same expansion
for $\tilde g(T) = g(T+2)-\frac{\pi^2}{2T^2}$,
for which $\tilde g'(T) = \frac{\pi^2}{T^3} - \frac{\pi}{(T+2)^2} \tan(\frac{\pi}{T+2})$:
\[
|\tilde g(T) - \tilde g(T_n^*) | \leq  C \frac{ |T-T_n^*|}{ (T_n^*)^4}\,,
\] 
so that, inserting $T = \lfloor T_n^* \rfloor + t$
\begin{equation}
\label{tildegapprox}
|\tilde g(\lfloor T_n^* \rfloor + t)  n- \tilde g(T_n^*) n| \leq C \frac{|t+\delta_n|}{a_n^2} = \bar{o}(1) \frac{ t^2}{a_n^2} + \bar{o}(1)\end{equation}
as $n\to\infty$, uniformly in $t$ (consider separately the case $|t|\leq a_n$ and $|t|\geq a_n$).
 
All together, plugging~\eqref{phiapprox}-\eqref{tildegapprox}
into~\eqref{eq:localZ}, 
we end up with
the desired result, with
$\psi_n \defeq \psi(h_n) \exp( -h_n- \phi_n(T_n^*) -\tilde g(T_n^*) n )$ which coincides with the definition~\eqref{def:psin} above.
\end{proof}

\subsection{Asymptotics of the partition function}
\label{sec-part}

Lemma~\ref{lem:local} allows us to obtain the correct 
behavior for the partition function.

\begin{proof}[Proof of Theorem~\ref{th-equiv}]
Assume that $h_n\geq n^{-1/2} (\log n)^{3/2}$
and that $\lim_{n\to\infty} n^{-1/4}h_n=0$, so in particular $a_n\to+\infty$.

Note that by \cite{berger2020one} (or simply using large deviation principles),
we have for any $\gep>0$ 
\[
\lim_{n\to\infty} \PP_{n,h_n}\Big( | T_n- T_n^*| >\gep T_n^* \Big) =0 \,.
\]
Therefore, one can find some vanishing sequence $(\gep_n)_{n\geq 0}$
such that we have the asymptotic equivalence
$Z_{n,h_n} = (1+\bar{o}(1)) Z_{n,h_n} ( | T_n- T_n^*| \leq \gep_n T_n^*)$.
We therefore only have to estimate that last partition function.
We may decompose it as
\[
\begin{split}
 Z_{n,h_n}&\Big(  |T_n - T_n^*| \leq \gep_n T_n^* \Big) 
  = \sum_{ t = -  \lfloor \gep_n T_n^*\rfloor }^{\lfloor \gep_n T_n^* \rfloor} \sum_{ -(T_n^* +t)  \leq 2w \leq T_n^* +t  } Z_{n,h_n}\Big( T_n = \lfloor T_n^*\rfloor +t  \,, W_n =w \Big) \,.
 \end{split}
\]
Therefore, thanks to Lemma~\ref{lem:local},
using that $\frac{|t|}{a_n^2} = \bar{o}(1) \frac{t^2}{a_n^2} +\bar{o}(1)$ 
as $n\to\infty$ with a $\bar{o}(1)$ uniform in~$t$ (since $a_n\to\infty$),
we get that 
\[
\begin{split}
 Z_{n,h_n}\Big(   |T_n - T_n^*| \leq \gep_n T_n^*  \Big) 
 & = (1+\bar{o}(1))  \psi_n \sum_{ t =  -  \lfloor \gep_n T_n^*\rfloor }^{ \lfloor \gep_n T_n^* \rfloor}  
  e^{- (1+\bar{o}(1)) \frac{ t^2 }{2 a_n^2} }
  \sumtwo{ -(T_n^* +t)  \leq 2w \leq T_n^* +t  }{ w \in \frac12( \lfloor T_n^* \rfloor  +t) + \mathbb Z } 
  \cos\Big( \frac{\pi w}{T_n^*} \Big)  \,.
  \end{split}
\]
\par Now, as $T_n^*$ goes to $+\infty$,
the internal sum is a Riemann sum:
we have, uniformly for $ (1-\gep_n)T_n^*  \leq t \leq  (1+\gep_n) T_n^* $,
\[
    \sumtwo{ -(T_n^* +t)  \leq 2w \leq T_n^* +t  }{ w \in \frac12( \lfloor T_n^* \rfloor  +t) + \mathbb Z } 
  \cos\Big( \frac{\pi w}{T_n^*} \Big)
  \sim  T_n^* \int_{-\frac12}^{\frac12} \cos(\pi v) \, \dd v = \frac{2}{\pi}  T_n^*\,.
\]
Then, a sum over $t$ remains, which is also a Riemann sum:
as $a_n \to+\infty$ and $\gep_n T_n^*/a_n \to +\infty$ ($T_n^*/a_n \geq (cst.) \sqrt{\log n}$ so such sequence $(\eps_n)$ exists), we have
\[
\sum_{ t =  -  \lfloor \gep_n T_n^*\rfloor }^{ \lfloor \gep_n T_n^* \rfloor}  
  e^{- (1+\bar{o}(1)) \frac{ t^2 }{2 a_n^2} }
   \sim a_n  \int_{-\infty}^{\infty}  e^{-  \frac{ u^2 }{2} } \dd u 
    = \sqrt{2\pi}  a_n \,.
 \]

All together,
we have proved that, as $n\to\infty$
\begin{equation}
\label{eq:equivZnh}
Z_{n,h_n} \sim \frac{2\sqrt{2}}{\sqrt{\pi}} \psi_n a_n T_n^*\,.
\end{equation}
Recalling the definition of $T_n^*$ and $a_n$,
we have $a_n T_n^* = \frac{\pi}{\sqrt{3}} \frac{\sqrt{n}}{h_n}$.
Additionally,
recalling the definition~\eqref{def:psin} of $\psi_n$,
and using that $g(T+2) = \frac{\pi^2}{2T^2} - \frac{2\pi^2}{T^3} + \grdO(\frac{1}{T^4})$ as $T\to\infty$,
we get that
\[
\psi_n = \psi(h_n) \exp\Big( -h_n T_n^*-h_n - \frac{n \pi^2}{2 (T_n^*)^2} + \frac{2\pi^2 n}{ (T_n^*)^3} +\bar{o}(1) \Big) \,,
\]
since $\lim_{n\to\infty} \frac{n}{(T_n^*)^4} =0$ because 
$\lim_{n\to\infty} n^{-1/4}h_n=0$.
By the definition of $T_n^* $ we have $\frac{n \pi^2}{ (T_n^*)^3} = h_n$,
we get that $\psi_n \sim \psi(h_n) e^{h_n} e^{- \frac32 h_n T_n^*}$.
Putting all estimates together and noting that $e^{\alpha} (1-e^{-\alpha})^2= 2(\cosh(\alpha)-1)$,
this concludes the proof.
\end{proof}

\begin{proof}[Proof of Theorem~\ref{th-limite}]
The proof reduces to showing the following Lemma.
\begin{lemma}
\label{lem:restricted}
Let $h_n\geq n^{-1/2} (\log n)^{3/2}$
be such that $\lim\limits_{n\to\infty} n^{-1/4}h_n=0$.
Then, for any $r<s$,
we have
\[
 \lim_{n\to\infty} \frac{1}{\psi_n a_n T_n^*} Z_{n,h_n}\Big(  r\leq \frac{ |T_n - T_n^*|}{a_n} \leq s \Big) =  \frac{2}{\pi} \int_r^s e^{- \frac{u^2}{2}} \dd u \,,
\]
where $\psi_n$ is the sequence that appears in Lemma~\ref{lem:local}.
\end{lemma}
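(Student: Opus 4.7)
\textbf{Proof plan for Lemma~\ref{lem:restricted}.}
The strategy is to reuse the decomposition and double Riemann-sum argument from the proof of Theorem~\ref{th-equiv} just given, but replace the outer window $|t|\leq \gep_n T_n^*$ by the fixed-scale window $[ra_n,sa_n]$. Writing the partition function as a sum over $(t,w)$,
\[
Z_{n,h_n}\Big( r \leq \tfrac{T_n - T_n^*}{a_n} \leq s\Big) = \sum_{r a_n \leq t \leq s a_n}\ \sumtwo{-(T_n^*+t) \leq 2w \leq T_n^*+t}{w \in \frac12(\lfloor T_n^*\rfloor+t) + \mathbb Z} Z_{n,h_n}\big(T_n = \lfloor T_n^*\rfloor + t,\, W_n = w\big),
\]
I want to apply Lemma~\ref{lem:local} uniformly on every $(t,w)$ in this range. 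Under the standing assumptions we have $a_n\to\infty$ (from $\lim_n n^{-1/4}h_n=0$) and $a_n/T_n^*\to 0$ (since the lower bound $h_n\geq n^{-1/2}(\log n)^{3/2}$ forces $h_n^{1/3} n^{1/6}\to\infty$). Thus for fixed $r<s$ the sequence $\gep_n := 2\max(|r|,|s|)\, a_n/T_n^*$ vanishes and every $t$ in the outer sum satisfies $|t|\leq \gep_n T_n^*$, so Lemma~\ref{lem:local} yields
\[
Z_{n,h_n}(T_n = \lfloor T_n^*\rfloor + t,\, W_n = w) = \psi_n\, \big(\cos(\pi w/T_n^*) + \bar o(1)\big)\, \exp\!\Big(-(1+\bar o(1))\tfrac{t^2}{2 a_n^2}\Big),
\]
with a $\bar o(1)$ uniform over our range.

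I then evaluate the two sums separately as Riemann sums, exactly as in the proof of Theorem~\ref{th-equiv}. For fixed $t$ in our window, $w$ varies with unit spacing over an interval of length $T_n^*+t\sim T_n^*$, so via the change of variables $v = w/T_n^*$,
\[
\sumtwo{-(T_n^*+t) \leq 2w \leq T_n^*+t}{w \in \frac12(\lfloor T_n^*\rfloor+t) + \mathbb Z} \cos(\pi w / T_n^*) \sim T_n^* \int_{-1/2}^{1/2}\cos(\pi v)\,dv = \tfrac{2}{\pi}\, T_n^*,
\]
uniformly in $t$. For the outer sum, the substitution $u = t/a_n$ (step $1/a_n\to 0$) gives
\[
\sum_{r a_n \leq t \leq s a_n} e^{-(1+\bar o(1)) t^2/(2 a_n^2)} \sim a_n \int_r^s e^{-u^2/2}\,du,
\]
using that the multiplicative $\bar o(1)$ in the exponent is uniform in $t$ and the interval of integration is bounded. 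Multiplying through by $\psi_n$ yields
\[
Z_{n,h_n}\Big(r \leq \tfrac{T_n - T_n^*}{a_n} \leq s\Big) \sim \tfrac{2}{\pi}\, \psi_n\, a_n\, T_n^* \int_r^s e^{-u^2/2}\,du,
\]
which is precisely the claim.

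The only delicate point is the interaction between the multiplicative error $(1+\bar o(1))$ in the Gaussian exponent and the Riemann-sum approximation: since the $\bar o(1)$ is uniform in $t$, for any $\eta>0$ the summand is eventually sandwiched between $e^{-(1+\eta)t^2/(2a_n^2)}$ and $e^{-(1-\eta)t^2/(2a_n^2)}$, and each sandwiching sum is a Riemann approximation to $a_n\int_r^s e^{-(1\pm\eta)u^2/2}du$; letting $\eta\to 0$ pins down the limit. Once Lemma~\ref{lem:restricted} is in hand, dividing by the full partition function $Z_{n,h_n}\sim \frac{2\sqrt 2}{\sqrt\pi}\psi_n a_n T_n^*$ from Theorem~\ref{th-equiv} gives $\PP_{n,h_n}(r\leq (T_n-T_n^*)/a_n\leq s)\to \frac{1}{\sqrt{2\pi}}\int_r^s e^{-u^2/2}du$, which is the convergence of $(T_n-T_n^*)/a_n$ to $\mathcal N(0,1)$; the joint convergence with $W_n/T_n^*$ and the asymptotic independence stated in Theorem~\ref{th-limite} follow by combining this with Proposition~\ref{prop:cv-loi}.
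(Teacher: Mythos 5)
Your proof is correct and follows essentially the same route as the paper: restrict the sum to $t \in [r a_n, s a_n]$, apply Lemma~\ref{lem:local}, evaluate the inner sum over $w$ as a Riemann sum giving $\frac{2}{\pi}T_n^*$, and the outer sum over $t$ as a Riemann sum giving $a_n \int_r^s e^{-u^2/2}\,\dd u$. Your explicit choice of $\gep_n \asymp a_n/T_n^*$ to justify the uniform use of Lemma~\ref{lem:local} and the $(1\pm\eta)$ sandwich handling the $(1+\bar o(1))$ in the exponent are details the paper leaves implicit, but the argument is the same.
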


Indeed, once we have this lemma, in view of the asymptotics~\eqref{eq:equivZnh} and Proposition \ref{prop:cv-loi},
we get that for any $r<s$ and any $a<b$,
\[
\begin{split}
\PP_{n,h_n}\Big(  r\leq \frac{ |T_n - T_n^*|}{a_n} \leq s, a \leq  \frac{W_n}{T_n^*} \leq b \Big) &
= \frac{1}{Z_{n,h_n}} Z_{n,h_n}\Big(  r\leq \frac{ |T_n - T_n^*|}{a_n} \leq s, a \leq  \frac{W_n}{T_n^*} \leq b \Big)\\
&\xrightarrow{n\to\infty}  \int_r^s \frac{1}{\sqrt{2\pi}} e^{-\frac{u^2}{2}} \dd u  \int_{a}^{b} \frac{\pi}{2}\cos(\pi v) \mathbbm{1}_{[-\frac12,\frac12]} \dd v \,,
\end{split}
\]
which concludes the proof.
\end{proof}

\begin{proof}[Proof of Lemma~\ref{lem:restricted}]
The proof proceeds as for the proof of Theorem~\ref{th-equiv}.
We can decompose the partition function
as
\[
\begin{split}
\sum_{ t = \lfloor r a_n \rfloor }^{ \lfloor s a_n \rfloor} & \sumtwo{ -\lfloor T_n^* \rfloor  -t  \leq 2w \leq \lfloor T_n^* \rfloor + t}{ w \in \frac12( \lfloor T_n^* \rfloor  +t) + \mathbb Z }  Z_{n,h_n}\Big( T_n = \lfloor T_n^*\rfloor +t  \,, W_n =w \Big) \\
& = (1+\bar{o}(1)) \psi_n \sum_{ t = \lfloor r a_n \rfloor }^{ \lfloor s a_n \rfloor}  
  e^{- (1+\bar{o}(1))\frac{ t^2 }{2 a_n^2} }
  \sumtwo{ -\lfloor T_n^* \rfloor  -t  \leq 2w \leq \lfloor T_n^* \rfloor + t}{ w \in \frac12( \lfloor T_n^* \rfloor  +t) + \mathbb Z } 
  \cos\Big( \frac{\pi w}{T_n^*} \Big)  \,,
  \end{split}
\]
where we have used~Lemma~\ref{lem:local} as above (using that $a_n\to\infty$).

Again, as $T_n^*$ goes to $+\infty$, the internal sum is a Riemann sum:
we have, uniformly for $ \lfloor T_n^* \rfloor + \lfloor r a_n \rfloor  \leq t \leq  \lfloor T_n^* \rfloor + \lfloor s a_n \rfloor $,
\[
  \sumtwo{ -\lfloor T_n^* \rfloor  -t \leq 2w \leq \lfloor T_n^* \rfloor + t}{ w \in \frac12( \lfloor T_n^* \rfloor  +t) + \mathbb Z } 
  \cos\Big( \frac{\pi w}{T_n^*} \Big)
  \sim  T_n^* \int_{-\frac12}^{\frac12} \cos(\pi v) \, \dd v \sim \frac{2}{\pi} T_n^* \,.
\]
Then, the sum over $t$ that remains is also a Riemann sum: as $a_n \to+\infty$, we have
\[
\sum_{ t =  \lfloor s a_n \rfloor }^{\lfloor t a_n \rfloor}  
  e^{- (1+\bar{o}(1)) \frac{ t^2 }{2 a_n^2} }
  \sim  a_n  \int_s^t  e^{-\frac{ u^2 }{2} } \dd u \,,
 \]
which concludes the proof.
\end{proof}

\section{Strong penalization and vanishing fluctuations}
\label{sec:strong}

\subsection{The case \texorpdfstring{$\liminf\limits_{n\to\infty} n^{-1/4}h_n = +\infty$}{}}

See that the case where $a_n \to 0$ is much more restrictive to establish an analog of Lemma~\ref{lem:local}, as $\grdO(a_n^{-2})$ quantities now bring extremely large contributions to the exponential part of $Z_{n,h_n}$ and slight deviations from the optimal size $T_n^*$ will be penalized by a large factor. Indeed, if we are to get a \textit{continuity} from Theorem \ref{th-limite} when $\limsup_{n\to\infty} a_n < \infty$, we want to know the exact asymptotic law of fluctuations without renormalization. We denote 
\begin{equation}
    \label{def:barphi}
    \bar{\phi}_n(T) = h_n(T+1) + \frac{n\pi^2}{2(T+2)^2} 
\end{equation}
and $T_n^o \defeq\argmin \bar{\phi}_n(T)$ the (more exact) optimal amplitude of the range.

\begin{lemma}\label{lem:local0}
Assume that $\lim_{n \to \infty} n^{-1/4}h_n = +\infty$ and $\lim_{n \to \infty} n^{-1}h_n = 0$ and let $(\eps_n)_{n\geq 1}$ be any vanishing sequence. Then, for any $t \in \ZZ \setminus \mathset{0,1}$ and $w \in \frac12 \ZZ$ such that $|t| \leq \eps_n T_n^o$ and $w -\frac12(\lfloor T_n^o \rfloor + t) \in 2 \mathbb Z$, we have
\begin{equation}
\label{part equiv fin 0}
    Z_{n,h_n}\Big( T_n = \lfloor T_n^* -2\rfloor + t  , W_n = w \Big) =  \bar{\psi}_n  \times  \Big(\cos\Big( \frac{\pi w}{T_n^*} \Big) +\bar{o}(1) \Big) \times e^{- (1+\bar{o}(1)) \frac{ (t- t_n^o)^2 }{2 a_n^2}} \,,
\end{equation}
where $t_n^o \defeq T_n^o - \lfloor T_n^o \rfloor$, $\bar{o}(1)$ is a vanishing quantity that depends only on $\eps_n$ and
\begin{equation}
\label{def:psin0}
\bar{\psi}_n \defeq \psi(h_n) \exp\Big( -h_n(T_n^*-1) - g(T_n^*) n  \Big) \,,
\quad \text{ with }\ \psi(\alpha) = \frac{4}{\pi} ( 1-  e^{- \alpha})^2\,.
\end{equation}
When $t \in \mathset{0,1}$ we instead have
\begin{equation}
\label{part equiv fin 0-1}
    Z_{n,h_n}\Big( T_n = \lfloor T_n^* -2\rfloor + t  , W_n = w \Big) =  \bar{\psi}_n  \times  \Big(\cos\Big( \frac{\pi w}{T_n^*} \Big) +\bar{o}(1) \Big) \times e^{- (1+\bar{o}(1)) \frac{1}{2 a_n^2} \left[ \frac{1}{T_n^o}|t- t_n^o| + (t- t_n^o)^2 \right]} .
\end{equation}
\end{lemma}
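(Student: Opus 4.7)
The proof runs in parallel with that of Lemma~\ref{lem:local} but must be adapted to the regime $a_n \to 0$, in which the quadratic Taylor term no longer forces $|T-T_n^o|$ to be large, so that single values of $t$ near~$t_n^o$ carry non-negligible weight. The starting point is the exact decomposition $Z_{n,h_n}(T_n = T, W_n = w) = e^{-h_n(T+1)}\,\PP(E_x^y(n))$ for $T = \lfloor T_n^*-2\rfloor + t$ and $x = T/2-w$, $y = T/2+w$. Since $\lim_n n^{-1/4} h_n = +\infty$ implies $n/(T_n^*)^3 = h_n/\pi^2 \to +\infty$, Theorem~\ref{prop equiv proba} (first case) applies uniformly and gives $\PP(E_x^y(n)) = (1+\bar o(1)) \frac{4}{\pi} \sin(\pi(x+1)/T) e^{-g(T+2) n}$. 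Using $x = T/2-w$, we rewrite $\sin(\pi(x+1)/T) = \cos(\pi(1-w)/T)$, which equals $\cos(\pi w/T_n^*) + \bar o(1)$ since $|T-T_n^*| = \grdO(1)$ and $|w|\leq T_n^*/2$. This yields the prefactor in~\eqref{part equiv fin 0}--\eqref{part equiv fin 0-1}, together with the identity $(4/\pi)/\psi(h_n) = (1-e^{-h_n})^{-2} = 1+\bar o(1)$ when $h_n\to\infty$.

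The heart of the proof is the analysis of the exponent $F(T) = h_n(T+1) + g(T+2) n$. Note that $\bar\psi_n = \psi(h_n)\, e^{-F(T_n^o)}$ since $T_n^o = T_n^*-2$. Decompose $F = \bar\phi_n + \tilde g \cdot n$ with $\tilde g(T) = g(T+2) - \frac{\pi^2}{2(T+2)^2} = \frac{\pi^4}{12(T+2)^4} + \grdO((T+2)^{-6})$. Since $T_n^o = \argmin_{\RR}\bar\phi_n$ and $\bar\phi_n''(T_n^o) = 3n\pi^2/(T_n^*)^4 = 1/a_n^2$, a second-order Taylor expansion gives, with $s = T-T_n^o = t - t_n^o$,
\[
\bar\phi_n(T_n^o + s) - \bar\phi_n(T_n^o) = \frac{s^2}{2 a_n^2} + \grdO\!\left(\frac{|s|^3}{T_n^o\, a_n^2}\right),
\]
while the (first-order) expansion of $\tilde g \cdot n$, using $\tilde g'(T_n^o)\, n = \grdO(1/(T_n^o a_n^2))$, yields
\[
(\tilde g(T_n^o+s) - \tilde g(T_n^o))\, n = \grdO\!\left(\frac{|s|}{T_n^o\, a_n^2}\right).
\]

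For $t \notin \{0,1\}$ we have $|s| \geq \min(t_n^o, 1-t_n^o) \geq$ a value bounded below; more importantly, once summed with neighbours $|s|$ takes values of order~$1$ or larger, so both correction terms are $\bar o(s^2/a_n^2)$ because $T_n^o \to +\infty$. This gives $F(T)-F(T_n^o) = (1+\bar o(1)) (t-t_n^o)^2/(2 a_n^2)$ and hence~\eqref{part equiv fin 0}. For $t \in \{0,1\}$, however, $|s| < 1$ and the linear correction $|s|/(T_n^o a_n^2)$ is of the same order as $s^2/a_n^2$ (in particular, when $|s|$ is of order $1/T_n^o$ it even dominates). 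Keeping this contribution explicit yields~\eqref{part equiv fin 0-1}.

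The main obstacle is Step~3: identifying the precise form $\frac{1}{T_n^o}|t-t_n^o|$ of the linear correction---\emph{with absolute value}---for $t\in\{0,1\}$. A naive Taylor expansion only produces a signed linear term coming from $\tilde g'(T_n^o)n$ and from the cubic correction of $\bar\phi_n$; extracting the symmetric $|s|/T_n^o$ form requires tracking both the shift between $T_n^o$ and the actual integer minimizer of $F$ (of order $1/T_n^o$), and the interaction of this shift with the prefactor $\sin(\pi(x+1)/T)$, whose deviation from $\cos(\pi w/T_n^*)$ is also of order $1/T_n^o$. Checking that these contributions combine into the stated absolute-value term, with the correct coefficient, is the delicate accounting that must be carried out.
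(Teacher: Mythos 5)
Your Steps 1--2 and your treatment of $t \notin \{0,1\}$ are essentially the paper's proof: the same factorization $Z_{n,h_n}(T_n = T, W_n = w) = e^{-h_n(T+1)}\PP(E_x^y(n))$, the same use of the first case of Theorem~\ref{prop equiv proba} (legitimate since $n/T^3 \asymp h_n \to \infty$ uniformly for $|t|\leq \eps_n T_n^o$), the same splitting of the exponent into $\bar\phi_n(T) + n\,\bar g(T)$ with $\bar g(T) = g(T+2) - \frac{\pi^2}{2(T+2)^2}$, and the same Taylor estimates ($\bar\phi_n''(T_n^o) = a_n^{-2}$ with cubic error $\grdO(|s|^3/(T_n^o a_n^2))$, and a variation of $n\bar g$ of order $\grdO((|s|+s^2)/(T_n^o a_n^2))$, where $s = t - t_n^o$). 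One slip in Step 3: for $t \notin \{0,1\}$ the relevant lower bound is $|s| = |t-t_n^o| \geq 1$, not $\min(t_n^o, 1-t_n^o)$, and no ``summation with neighbours'' is involved --- the lemma is a statement for a single $t$; with $1 \leq |s| \leq \eps_n T_n^o$ both corrections are indeed $\bar{o}(s^2/a_n^2)$ and \eqref{part equiv fin 0} follows.

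The gap is the case $t \in \{0,1\}$, which you explicitly leave unproved (``delicate accounting that must be carried out''). In the paper's proof none of the refinements you list is performed: it does not locate the integer minimizer of the exponent, does not compute the signed coefficient of the linear term, and does not let the prefactor interfere --- the $\grdO(1/T_n^o)$ discrepancy between $\sin(\pi(x+1)/T)$ and $\cos(\pi w/T_n^*)$ is absorbed additively into the $\bar{o}(1)$ next to the (bounded) cosine, never into the exponential. The absolute value in \eqref{part equiv fin 0-1} is simply that of an error bound: from the explicit first and second derivatives of $\bar g$ one gets
\[
n\,\big|\bar g(T) - \bar g(T_n^o)\big| \;\leq\; \frac{C}{T_n^o\, a_n^2}\Big( |T-T_n^o| + (T-T_n^o)^2 \Big),
\]
and this two-sided error, together with the cubic error of $\bar\phi_n$, is absorbed into the $(1+\bar{o}(1))$ multiplying the exponent after substituting $T - T_n^o = t - t_n^o$; since for $t\in\{0,1\}$ the quantity $|t-t_n^o|/T_n^o$ need not be negligible compared with $(t-t_n^o)^2$, it is kept explicitly, and that is all \eqref{part equiv fin 0-1} amounts to. Your suspicion that a signed Taylor correction cannot be literally asymptotic to a positive multiple of $|t-t_n^o|/T_n^o$ is well taken --- when $|t-t_n^o| \lesssim 1/T_n^o$ the factor $(1+\bar{o}(1))$ in \eqref{part equiv fin 0-1} is really absorbing a bounded, not an asymptotically negligible, discrepancy --- but reproducing the paper's argument requires only the crude bound above, not the finer bookkeeping you describe. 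As written, your proposal establishes \eqref{part equiv fin 0} but stops short of \eqref{part equiv fin 0-1}.
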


\begin{proof}
We can perform the same decomposition as in Lemma~\ref{lem:local} and setting $T=\lfloor T_n^* -2 \rfloor +t$, we arrive at (analogously to~\eqref{eq:localZ})
\begin{equation}
\label{eq:localZ2}
\begin{split}
Z_{n,h_n}\Big( T_n = T  , W_n = w \Big) & = \psi (h_n)    \Big(\cos\Big( \frac{\pi w}{T} \Big) +\bar{o}(1) \Big) \times e^{- h_n(T+1) - g(T+2) n}\\
& = \psi(h_n) \Big(\cos\Big( \frac{\pi w}{T} \Big) +\bar{o}(1) \Big) \times e^{- \bar \phi_n(T) - \bar g(T) n}
\end{split}
\end{equation}
with $\bar \phi_n$ defined above in~\eqref{def:barphi} and where $\bar g(T) \defeq \frac{\pi^2}{T+2} - g(T+2)$.
One can then easily check that $T_n^o = T_n^* - 2$ and that 
$\bar{\phi}_n''(T_n^o) = -\frac{3n\pi^2}{(T_n^o + 2)^4}$ and $\bar{\phi}_n^{(3)}(T_n^o) = \frac{12n\pi^2}{(T_n^o + 2)^5}$,
thus
\[ \left| \bar{\phi}_n(T) - \bar{\phi}_n(T_n^o) - \frac{3n\pi^2}{2(T_n^o + 2)^4}(T-T_n^o)^2 \right| \leq \frac{12n\pi^2}{(T_n^o + 2)^5}(T-T_n^o)^3 = \bar{o}(1) \frac{(T-T_n^o)^3}{a_n^2}. \]
Furthermore, the first two orders of the Taylor expansion of $\bar{g}(T)$ around $T_n^o$ are given by
\begin{multline*}
   \bar{g}(T_n^o) + (T-T_n^o) \left( \frac{\pi^2}{(T_n^o + 2)^3} - \frac{\pi}{(T_n^o + 2)^2} \tan \frac{\pi}{T_n^o + 2} \right)\\
   + \frac{(T-T_n^o)^2}{2}  \left( -\frac{3\pi^2}{(T_n^o+2)^4} + \frac{2\pi}{(T_n^o+2)^3} \tan \frac{\pi}{T_n^o + 2} + \frac{\pi^2}{(T_n^o+2)^4} \Big(1+\tan^2 \frac{\pi}{T_n^o + 2} \Big) \right) \,.
\end{multline*}
A Taylor expansion of the tangent leads to the following bound
\begin{multline*} 
n|\bar{g}(T) - \bar{g}(T_n^o)| \\
\leq |T-T_n^o|\frac{n\pi^3}{3(T_n^o + 2)^5} + (T-T_n^o)^2 \frac{n\pi^3}{3(T_n^o + 2)^6} = \frac{1+\bar{o}(1)}{a_n^2 T_n^o} \left[|T-T_n^o| + (T-T_n^o)^2 \right] \,.
\end{multline*}
In particular, we get the lemma injecting $T = \lfloor T_n^* -2 \rfloor +t = T_n^o - t_n^o + t$
in~\eqref{eq:localZ2}, using also that $\bar \phi_n(T_n^o) + n \bar g(T_n^o)= h_n(T_n^*-1) + n g(T^*_n)$ and $|T-T_n^o| > 1$ for $t \not\in \mathset{0,1}$.
\end{proof}

\begin{proof}[Proof of Theorem \ref{th-equiv}]

Suppose $\liminf\limits_{n\to\infty} n^{-1/4}h_n = +\infty$ then $a_n \to 0$ and we can't apply Riemann summations as in the proof of Theorem \ref{th-equiv} or Lemma \ref{lem:restricted}. However Lemma \ref{lem:local0} allows to exclude the slightest deviations using $a_n^{-2} \to \infty$ that we use to estimate
\[ \begin{split}
 Z_{n,h_n}\Big(   |T_n - T_n^*| \leq \eps_n T_n^*  \Big) 
 & = (1+\bar{o}(1))  \psi_n \sum_{ t =  -  \lfloor \gep_n T_n^*\rfloor }^{ \lfloor \gep_n T_n^* \rfloor}  
e^{- (1+\bar{o}(1)) \frac{\varsigma_n(t)}{2 a_n^2} }
  \!\!\! \sumtwo{ -(T_n^* +t)  \leq 2w \leq T_n^* +t  }{ w \in \frac12( \lfloor T_n^* \rfloor  +t) + \mathbb Z } 
  \!\!\!  \cos\Big( \frac{\pi w}{T_n^*} \Big) \, ,
  \end{split} \]
where we recall wrote $\varsigma_n(t) \defeq \frac{1}{T_n^o}|t- t_n^o| \indic{t \in \mathset{0,1}} + (t- t_n^o)^2$.
As $T_n^*$ goes to $+\infty$, the internal sum still is a Riemann sum and thus, uniformly for $ (1-\gep_n)T_n^o  \leq t \leq  (1+\gep_n) T_n^o $,
\[
    \sumtwo{ -(T_n^* +t)  \leq 2w \leq T_n^* +t  }{ w \in \frac12( \lfloor T_n^* \rfloor  +t) + \mathbb Z } 
  \cos\Big( \frac{\pi w}{T_n^*} \Big)
  \sim  T_n^* \int_{-\frac12}^{\frac12} \cos(\pi v) \, \dd v = \frac{2}{\pi}  T_n^*\,.
\]
\par Now, for the  sum that remains, we factorize by the largest term, attained at $t = 0$ or $t = 1$ depending on $t_n^o$. Thus denote $\delta_n^2 = \varsigma_n(t)$ with $t = 0$ or $1$ such that $\delta_n^2 = \varsigma_n(0) \wedge \varsigma_n(1)$, meaning $\delta_n^2 = \varsigma_n(\indic{t_n^o \geq \frac12 + \frac{1}{T_n^o}})$. We have
\begin{equation}\label{Znh decomp 0}
    \sum_{ t =  -  \lfloor \eps_n T_n^*\rfloor }^{ \lfloor \eps_n T_n^* \rfloor}  
  e^{- (1+\bar{o}(1))\varsigma_n(t)} = e^{-(1+\bar o(1)) \delta_n^2} \Bigg( 1 + e^{\frac{1}{2 a_n^2}(\delta_n^2 -  \varsigma_n(1-\indic{t_n^o \geq \frac12 + \frac{1}{T_n^o}}))} + \sum_{\substack{t =  -  \lfloor \gep_n T_n^*\rfloor\\ t \neq 0,1}}^{ \lfloor \gep_n T_n^* \rfloor} e^{\delta_n^2 - \frac{(t-t_n^o)^2}{2a_n^2}} \Bigg) \,.
\end{equation} 
First use dominated convergence to get
\[ 0 \leq \sum_{\substack{t =  -  \lfloor \gep_n T_n^*\rfloor\\ t \neq 0,1}}^{ \lfloor \gep_n T_n^* \rfloor} e^{\frac{\delta_n^2 - (t-t_n^o)^2}{2a_n^2}} \leq \sum_{\substack{t \in \ZZ\\ t \neq 0,1}}
  e^{-\frac{1}{2a_n^2}(t-1)(t+1-t_n^o)} \xrightarrow[n \to \infty]{} 0 \,. \]
Then, note that when $t_n^o \neq \frac12$, the second term of \eqref{Znh decomp 0} goes to $0$, whereas when $t_n^o = \frac12$ it is equal to $1$. Thus we have
\[ Z_{n,h_n} = \frac{2}{\pi} \big( 1 + \indic{t_n^o = \frac12} \big) \psi(h_n) e^{h_n - \frac32 h_n T_n^* - \frac{\delta_n^2}{2a_n^2} + \bar{o}(a_n^{-2})} \exp \left( g(T_n^*+2)n - \frac{n\pi^2}{2T_n^*} - \frac{2}{(T_n^*)^3} \right) \,, \]
where we recall the definition $\psi(a) = \frac{4}{\pi} ( 1-  e^{- a})^2$. Note that $g(T_n^*+2)n - \frac{n\pi^2}{2T_n^*} - \frac{2}{(T_n^*)^3} = \grdO \big( \frac{n}{(T_n^*)^4} \big) = \grdO(a_n^{-2})$ now goes to infinity.
We could push the asymptotic expansion to any arbitrary order, but since we already have the term $\bar{o}(a_n^{-2}) = \bar{o}(\frac{n}{(T_n^*)^4})$, we simply use the following expansion up to order $4$:
\[ g(T_n^*+2)n - \frac{n\pi^2}{2T_n^*} - \frac{2}{(T_n^*)^3} = (72 + \pi^2) \frac{n\pi^2}{12(T_n^*)^4} (1 + \bar{o}(1)) \,. \]
Using $e^{\alpha} (1-e^{-\alpha})^2= 2(\cosh(\alpha)-1)$ and writing $a_n^{-2}$ explicitly concludes the proof.
\end{proof}

\begin{proof}[Proof of Theorem \ref{th-limite}]
The above proof of Theorem \ref{th-equiv} already shows that
\[ \frac{1}{Z_{n,h_n}} Z_{n,h_n}\Big( |T_n - \lfloor T_n^*-2 \rfloor| \in \mathset{0,1} \Big) \xrightarrow[n\to\infty]{} 1  \,. \]
According to \eqref{Znh decomp 0}, the proof only consists of finding which values of $t$ contribute to $e^{-\frac{\delta_n^2}{a_n^2}}$. By the definition of $\delta_n$ we easily see that if $t_n^o < \frac12$ it is $t = 0$, if $t_n^o > \frac12$ it is $t=1$, and when $t_n^o = \frac12$ both have the same contribution. This observation leads to defining $\mathcal{A}_n$ as announced in Theorem \ref{th-limite}, and Proposition \ref{prop:cv-loi} completes the proof.
\end{proof}

\subsection{Case \texorpdfstring{$\lim\limits_{n \to \infty} n^{-1/4} h_n = \hat{h}$}{}: order one fluctuations}

\begin{proof}[Proof of Proposition \ref{prop-1/4}]
Going back to use Lemma \ref{lem:local0}, we get for any vanishing sequence $(\gep_n)_{n\geq 1}$
\[ 
 Z_{n,h_n}\Big(   |T_n - T_n^*| \leq \eps_n T_n^*  \Big) 
  = (1+\bar{o}(1))  \bar \psi_n \sum_{ t =  -  \lfloor \gep_n T_n^*\rfloor +2 }^{ \lfloor \gep_n T_n^* \rfloor+2}  
e^{- (1+\bar{o}(1)) \frac{\varsigma_n(t)}{2 a_n^2} }
 \!\!\! \sumtwo{ -(T_n^* +t)  \leq 2w \leq T_n^* +t  }{ w \in \frac12( \lfloor T_n^* \rfloor  +t) + \mathbb Z }   \!\!\!
  \cos\Big( \frac{\pi w}{T_n^*} \Big)  \,.
\]
\par The internal Riemann sum is dealt with the same method as before, while we can take the limit for $a_n$ in the external sum. Thus we have, as $n\to\infty$
\[ \begin{split}
 Z_{n,h_n}\Big(   |T_n - T_n^*| \leq \eps_n T_n^*  \Big) \sim \frac{2}{\pi} T_n^* \bar \psi_n \sum_{ t =  -  \lfloor \gep_n T_n^*\rfloor }^{ \lfloor \gep_n T_n^* \rfloor}  e^{- \frac{\varsigma_n(t)}{2 a_n^2} }  \,,
  \end{split} \]
which clearly gives that $Z_{n,h_n} \sim \frac{2}{\pi} \bar\psi_n T_n^* \theta_n(a)$, because we already know that taking $\gep_n$ going to zero sufficienly slowly we have $\lim_{n\to\infty}\PP_{n,h_n}( |T_n-T_n^*| >\gep_n T_n)=0$.

Moreover, applying again Lemma~\ref{lem:local0}, we also get that for any fixed integer $t\in \ZZ$,
\[
 Z_{n,h_n}\Big(   T_n = \lfloor T_n^*-2\rfloor +t \Big) 
  \sim \frac{2}{\pi} T_n^* \bar \psi_n 
e^{- \frac{\varsigma_n(t)}{2 a_n^2} } \,,
\]
using the same calculation as above.
This concludes the proof of Proposition~\ref{prop-1/4}.
\end{proof}

\section{Range endpoints and gambler's ruin estimates}
\label{sec-ruine}

\subsection{Gambler's ruin estimates}

We consider a band $[0,T]$ with $T$ some positive integer, and choose a starting point $0 \leq z \leq T$. We denote $\tau_0\defeq \min\{n\geq 0 \,, S_n =0\}$, resp. $\tau_T\defeq\min\{n\geq 0 \,, S_n =T\}$,  the hitting time of the edge at $0$, resp. at $T$. We also denote $\tau \defeq \tau_0 \wedge \tau_T$. We recall the formulae of \cite[\S 14.5]{Feller} for the ruin problem, in the case of a symmetric walk.
We use the notation $n\leftrightarrow z$ if $n-z$ is even and denote by $\PP_z$ the law of the simple random walk starting at $z \in \ZZ$.

\begin{proposal}
For any $z \in \llbracket 1,T-1 \rrbracket$ and  $n > 1$,
\begin{equation}\label{Feller biais 0}
    \probaM{z}{\tau = \tau_0 = n} = \frac{2}{T} \sum_{1 \leq k < T/2} \cos^{n-1} \left( \frac{\pi k}{T} \right) \sin \left( \frac{\pi kz}{T} \right) \sin \left( \frac{\pi k}{T} \right) \indic{n \leftrightarrow z} \, .
\end{equation}
By symmetry, we also have $\probaM{z}{\tau = \tau_T = n} =
\probaM{T-z}{\tau = \tau_0 = n}$:
\begin{equation}\label{Feller biais T}
    \probaM{z}{\tau = \tau_T = n} = \frac{2}{T} \sum_{1 \leq k < T/2} (-1)^{k+1} \cos^{n-1} \left( \frac{\pi k}{T} \right) \sin \left( \frac{\pi kz}{T} \right) \sin \left( \frac{\pi k}{T} \right) \indic{n \leftrightarrow T-z} \,.
\end{equation}
\end{proposal}

\noindent
Note that if $T-z \leftrightarrow n \leftrightarrow z$, we have
\begin{equation*}
    \probaM{z}{\tau = n} = \frac{4}{T} \sum_{1 \leq k < T/4} \cos^{n-1} \left( \frac{ (2k - 1)\pi}{T} \right) \sin \left( \frac{ (2k - 1)\pi z}{T} \right) \sin \left( \frac{(2k - 1)\pi }{T} \right).
\end{equation*}

Let us now give the \textit{sharp} asymptotic behavior of the probabilities~\eqref{Feller biais 0}-\eqref{Feller biais T} above.
Recall the definition \eqref{g DL}: $g(T) = -\log \cos(\frac\pi T)$.
By symmetry, we only deal with the case $z\in \llbracket 0,\frac T2 \rrbracket$.
\begin{lemma}
\label{lemme-ruine}
Suppose that $T = T(n) \to\infty$ as $n \to \infty$ and that $\lim_{n\to\infty} \frac{n}{T^2} = +\infty$.
Then, we have the following asymptotics:
for all  $z\in \llbracket 0,\frac T2 \rrbracket$,
\begin{equation}\label{eq-proba-approx1}
\probaM{z}{\tau = \tau_0 = n} = \big(1 +\grdO (e^{-\frac{\pi^2 n}{T^2}})  \big) \frac{2}{T} \sin \left( \frac{z\pi}{T} \right) \tan \left( \frac{\pi}{T} \right) e^{-g(T) n} \indic{n \leftrightarrow z} \,,
\end{equation}
\begin{equation}
\probaM{z}{\tau = \tau_T = n} = \big(1 +\grdO  (e^{-\frac{\pi^2 n}{T^2}}) \big) \frac{2}{T} \sin \left( \frac{z\pi}{T} \right) \tan \left( \frac{\pi}{T} \right) e^{-g(T) n} \indic{n \leftrightarrow T-z} \,.
\end{equation}
Here,  $\grdO  (e^{-\frac{\pi^2 n}{T^2}})$ is uniform in
$z$.
\end{lemma}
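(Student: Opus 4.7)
The plan is to isolate the $k=1$ term in Feller's formula~\eqref{Feller biais 0} -- which exactly recovers the claimed main expression -- and to bound the tail $\sum_{k\geq 2}$ uniformly in $z$. For $k=1$ the summand reads $\frac{2}{T}\cos^{n-1}(\pi/T)\sin(\pi z/T)\sin(\pi/T)$, and the identities $\sin(\pi/T)=\tan(\pi/T)\cos(\pi/T)$ and $\cos^n(\pi/T)=e^{-g(T)n}$ show that this coincides with the right-hand side of~\eqref{eq-proba-approx1}. The case $z=0$ is trivial ($0=0$), so I reduce everything to showing that
\[
R_z \defeq \sum_{k=2}^{\lfloor T/2\rfloor}\cos^{n-1}(\pi k/T)\sin(\pi kz/T)\sin(\pi k/T)
\]
divided by $\sin(\pi z/T)\sin(\pi/T)\cos^{n-1}(\pi/T)$ is $\grdO(e^{-\pi^2 n/T^2})$ uniformly in $z\in\llbracket 1,T/2\rrbracket$.

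For this I would rely on two pointwise ingredients. First, the Chebyshev-type inequality $|\sin(ky)|\leq k|\sin(y)|$, a consequence of $\sin(ky)/\sin(y)=U_{k-1}(\cos y)$ with $\|U_{k-1}\|_\infty\leq k$; applied at $y=\pi z/T$ this cancels the $z$-dependence, which is the decisive step for uniformity down to $z=1$ where $\sin(\pi z/T)\asymp 1/T$. Second, the power-series identity $\log\cos x = -\sum_{j\geq 1} c_j x^{2j}$ with $c_j>0$, which, after subtraction at $\pi/T$ and $\pi k/T$, yields the clean monotone bound $\bigl(\cos(\pi k/T)/\cos(\pi/T)\bigr)^{n-1} \leq \exp\bigl(-\pi^2(k^2-1)(n-1)/(2T^2)\bigr)$ valid for every $1\leq k\leq T/2$, without any lower-order correction. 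Combining these with $\sin(\pi k/T)\leq \pi k/T$ and $T\sin(\pi/T)\geq 2$ and cancelling $\sin(\pi z/T)$, the ratio above is bounded by $\tfrac{\pi}{2}\sum_{k\geq 2} k^2 \exp\bigl(-\pi^2(k^2-1)(n-1)/(2T^2)\bigr)$, now entirely free of $z$.

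Under the hypothesis $n/T^2\to\infty$, this tail sum is dominated by its $k=2$ term $4\exp\bigl(-3\pi^2(n-1)/(2T^2)\bigr)$ and is therefore of order $e^{-3\pi^2 n/(2T^2)}$, well inside $\grdO(e^{-\pi^2 n/T^2})$. The second identity follows either from the symmetry $\probaM{z}{\tau=\tau_T=n}=\probaM{T-z}{\tau=\tau_0=n}$ that the paper already records, or by applying verbatim the same argument to~\eqref{Feller biais T}, the extra $(-1)^{k+1}$ being harmless under absolute values. The one step I really have to watch is uniformity in $z$: without the Chebyshev inequality I would pick up an unwanted factor $1/\sin(\pi z/T)\asymp T$ at $z=1$, degrading the bound to $\grdO(T e^{-3\pi^2 n/(2T^2)})$ and forcing the much stronger hypothesis $n/T^2\gtrsim \log T$ instead of just $n/T^2\to\infty$.
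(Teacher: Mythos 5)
Your proof is correct and follows essentially the same route as the paper: isolate the $k=1$ term of Feller's formula, which is exactly the stated main term since $\cos^{n-1}(\pi/T)\sin(\pi/T)=\tan(\pi/T)e^{-g(T)n}$, and show the remaining sum is $\grdO(e^{-\pi^2 n/T^2})$ relative to it, uniformly in $z$. Your two technical ingredients check out and are in fact a bit cleaner than the paper's execution: the Chebyshev bound $|\sin(ky)|\leq k|\sin y|$ handles the $z$-uniformity just as the paper's $\frac{2}{\pi}x\leq\sin x\leq x$ does, and the positivity of the coefficients of $-\log\cos$ gives the bound $\bigl(\cos(\pi k/T)/\cos(\pi/T)\bigr)^{n-1}\leq e^{-\pi^2(k^2-1)(n-1)/2T^2}$ uniformly for all $1\leq k<T/2$, which lets you avoid the paper's splitting of the sum at $k=\varepsilon T$ and still lands comfortably within $\grdO(e^{-\pi^2 n/T^2})$; the symmetry argument for the second identity is also the one the paper uses.
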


\begin{remark}
We recall that equations such as \eqref{eq-proba-approx1} are to be understood in the sense that
\[ \exists C > 0, \forall T = T(n), \quad \sup_{\substack{z \in \llbracket 0,\frac T2 \rrbracket\\ n \leftrightarrow z}} \left|  \frac{\probaM{z}{\tau = \tau_T = n}}{\frac{2}{T} \sin \left( \frac{z\pi}{T} \right) \tan \left( \frac{\pi}{T} \right) e^{-g(T) n}} - 1 \right| \leq C e^{-\frac{\pi^2 n}{T^2}} \quad \text{as } n \to \infty. \]
\end{remark}

\begin{proof}
The proof is inspired by \cite[Appendix B]{caravenna2009depinning}, but we need here a slightly sharper version.
In \eqref{Feller biais 0} and \eqref{Feller biais T} we denote 
$V_0 = V_0(n,T)$ the first term:
\[ 
V_0 = \frac{2}{T}\cos^{n-1} \left( \frac{\pi}{T} \right) \sin \left( \frac{\pi z}{T} \right) \sin \left( \frac{\pi}{T} \right) = \frac{2}{T} \sin \left( \frac{z\pi}{T} \right) \tan \left( \frac{\pi}{T} \right) e^{-g(T) n}  \,.
\]
It remains to control the remaining terms.
We let
\[
V_1  \defeq  \frac{2}{T}  \sum_{2\leq k <  T/2} \cos^{n-1} \left( \frac{\pi k}{T} \right) \sin \left( \frac{\pi kz}{T} \right) \sin \left( \frac{\pi k}{T} \right) \,, 
\]
and we only need to bound $V_1/V_0$.
Using the bounds $\frac{2}{\pi}x \leq \sin(x) \leq x$ for $x \in [0,\frac\pi2]$, we get that
\[
\frac{V_1}{V_0} \leq  \frac{\pi^2}{4}\sum_{2\leq k <  T/2}   k^2 \bigg(\frac{\cos \left( \frac{\pi k}{T} \right)}{\cos\left(\frac{\pi}{T}\right)} \bigg)^{n-1}  \,.
\]
Now, as $\frac{k}{T}\to 0$, we have
\[
\frac{\cos \left( \frac{\pi k}{T} \right)}{\cos\left(\frac{\pi}{T}\right)} 
= 1- \frac{\pi^2 (k^2-1)}{2T^2}  \Big(1+\grdO\Big( \frac{k^2}{T^2} \Big) \Big) \,.
\]
Hence, the l.h.s.\ is bounded by $\exp( - \frac{2\pi^2 (k^2-1)}{5T^2})$ provided that $\frac{k}{T}\leq \varepsilon$, for some given $\varepsilon \in (0,\frac12)$.
If $\frac{k}{T}\geq \varepsilon$,
we can simply bound
$\cos \left( \frac{\pi k}{T} \right) \leq \cos(\pi \varepsilon) \leq e^{- \frac12 \pi^2 \varepsilon^2}$.
We therefore get that $V_1/V_0$ is bounded by a constant times
\[
\sum_{2\leq k <  \varepsilon T}  k^2 e^{- \frac{2 n \pi^2 (k^2-1)}{5T^2}}
+ \sum_{\varepsilon T\leq k <  T/2}  k^2 e^{-\frac12  n \pi^2 \varepsilon^2 }
 \,.
\]
\par For the first sum, we write that it is 
\[
\begin{split}
e^{\frac{2\pi^2n }{5T^2}}  T^3  \times  \frac{1}{T} \sum_{2\leq k <  \varepsilon T}  \frac{k^2}{T^2} e^{- \frac{2 \pi^2 n k^2}{5T^2}}
& \leq e^{\frac{2\pi^2n}{5T^2}}  T^3 \int_{2/T}^{\infty} x^2 e^{-\frac{2\pi^2}{5} n x^2} \dd x  \\
& = e^{\frac{2\pi^2n}{5T^2}}   \frac{T^3}{n^{3/2}} \int_{2\sqrt{n}/T}^{\infty} u^2 e^{- \frac{2\pi^2}{5} u^2} \dd x 
\leq C \frac{T^2}{n}  \exp\Big( -\frac{7\pi^2 n}{5 T^2} \Big) \,,
\end{split}
\]
using that $\int_{v}^{\infty} u^2 e^{- \frac{2\pi^2}{5} u^2} \mathrm{d} u  \sim (cst.)\, v\, e^{- \frac{2\pi^2}{5} v^2}$ as $v\to\infty$.
This term is therefore bounded by a constant times $\exp( -\frac{\pi^2 n}{T^2})$, as $n/T^2$ goes to infinity.

For the other sum, we bound it by a constant times 
\[
T^3 \exp\Big(-\frac12  n \pi^2 \varepsilon^2\Big) 
\leq n^{3/2} \exp\Big(-\frac12  n \pi^2 \varepsilon^2 \Big) = \bar{o}\Big(\exp\Big( -\frac{\pi^2 n}{T^2} \Big) \Big) \,,
\]
as $\frac{n}{T^2} \to +\infty$ and $T\to\infty$.
We have therefore shown that $V_1/V_0$ is bounded by
a constant times $\exp( -\frac{\pi^2 n}{T^2})$, which concludes the proof.
\end{proof}


We now obtain an expression for the probability of staying in the band $[0,T]$, without touching the border, during a time $n \gg T^2$. 

\begin{lemma}\label{lemme conf strict} 
Assume that $T = T(n)\to\infty$ and that $\lim_{n\to\infty} \frac{n}{T^2} =+\infty$. Then, we have:
\begin{itemize}
\item If $T$ is odd,
 \begin{equation}
\probaM{z}{\tau > n} = \frac{2}{T} \sin \left( \frac{z\pi}{T} \right) \frac{1}{\tan \left( \frac{\pi}{2T} \right)} e^{-n g(T)} \big(1 +\grdO (e^{-\frac{\pi^2 n}{T^2}})  \big)  \,.
\end{equation}

\item If $T$ is even, letting $a = \indic{n \leftrightarrow z}$
\begin{equation}
\probaM{z}{\tau > n} = \frac{4}{T} \sin \left( \frac{z\pi}{T} \right) \frac{\cos^a\left( \frac{\pi}{T} \right)}{\sin \left( \frac{\pi}{T} \right)} e^{-n g(T)} \big(1 +\grdO (e^{-\frac{\pi^2 n}{T^2}})  \big)  \,.
\end{equation}
\end{itemize}
In particular, with a Taylor expansion, 
we get that
\begin{equation}
\label{equiv-pastouche}
f_n(z,T)\defeq\probaM{z}{\tau > n} = \frac{4}{\pi} \sin \left( \frac{z\pi}{T} \right)  e^{-n g(T)}  \left[1 +\grdO(T^{-2}) + \grdO\big( e^{-\frac{\pi^2 n}{T^2}} \big)  \right]\,,
\end{equation}
and note that if $n \geq \frac14 T^2 \log T$
then $e^{-\frac{\pi^2 n}{T^2}} \leq T^{-\frac14 \pi^2} \leq T^{-2}$.
\end{lemma}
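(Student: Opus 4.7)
The natural approach is via the spectral decomposition of the transition kernel $Q$ of the simple random walk killed on exiting $\llbracket 1, T-1\rrbracket$: its eigenfunctions are $v_k(i) = \sin(\pi k i/T)$ with eigenvalues $\cos(\pi k/T)$, $k = 1, \ldots, T-1$, pairwise orthogonal with $\|v_k\|^2 = T/2$. Writing $\probaM{z}{\tau > n} = \sum_{y=1}^{T-1} Q^n(z, y)$ and using that $\sum_{y=1}^{T-1}\sin(\pi k y/T)$ equals $\cot(\pi k/(2T))$ when $k$ is odd and vanishes when $k$ is even (a direct geometric sum), one obtains
\[
\probaM{z}{\tau > n} = \frac{2}{T}\sum_{\substack{k=1\\k\,\mathrm{odd}}}^{T-1}\cos^n\!\Big(\tfrac{\pi k}{T}\Big)\sin\!\Big(\tfrac{\pi k z}{T}\Big)\cot\!\Big(\tfrac{\pi k}{2T}\Big).
\]

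Next I would identify the terms of largest modulus. If $T$ is odd, the only odd $k$ with $|\cos(\pi k/T)| = \cos(\pi/T)$ is $k=1$, and the $k=1$ term gives directly the claimed leading form. If $T$ is even, both $k=1$ and $k = T-1$ achieve this maximum since $\cos(\pi(T-1)/T) = -\cos(\pi/T)$; I would combine them via the identities $\sin(\pi(T-1)z/T) = (-1)^{z+1}\sin(\pi z/T)$ and $\cot(\pi(T-1)/(2T)) = \tan(\pi/(2T))$. Their combined coefficient $\cot(\pi/(2T)) - (-1)^{n+z}\tan(\pi/(2T))$ simplifies via $\cot x - \tan x = 2\cos(2x)/\sin(2x)$ and $\cot x + \tan x = 2/\sin(2x)$ to $2\cos^a(\pi/T)/\sin(\pi/T)$ with $a = \indic{n\leftrightarrow z}$, matching the claim. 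The parity indicator $a$ emerges exactly from this pairing via the sign $(-1)^{n+z}$: this is the main technical subtlety in the proof.

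The sub-leading odd-$k$ terms are controlled by repeating the argument used to bound $V_1/V_0$ in Lemma~\ref{lemme-ruine}. Splitting the remaining odd indices into $k \leq T/2$ and $k > T/2$ (the latter reduced to the former via $k \to T-k$ using $\cos(\pi(T-k)/T) = -\cos(\pi k/T)$), one bounds $|\cos(\pi k/T)/\cos(\pi/T)|^n \leq \exp(-cn(k^2-1)/T^2)$ for $k \leq \varepsilon T$ and $|\cos(\pi k/T)|^n \leq \cos^n(\pi\varepsilon)$ for $k \geq \varepsilon T$, then applies exactly the Riemann/Gaussian estimate of Lemma~\ref{lemme-ruine} to get a total remainder of order $\grdO(e^{-\pi^2 n/T^2})$ (using $n/T^2 \to \infty$). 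Finally, \eqref{equiv-pastouche} follows from direct Taylor expansions: $\frac{2}{T}\cot(\pi/(2T)) = \frac{4}{\pi}(1 + \grdO(T^{-2}))$ in the odd case and $\frac{4\cos^a(\pi/T)}{T\sin(\pi/T)} = \frac{4}{\pi}(1 + \grdO(T^{-2}))$ in the even case (since $a\in\{0,1\}$ and $\cos(\pi/T) = 1 + \grdO(T^{-2})$), while the bound $e^{-\pi^2 n/T^2} \leq T^{-2}$ when $n \geq \frac14 T^2\log T$ is immediate since $\pi^2/4 > 2$.
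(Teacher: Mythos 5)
Your proposal is correct, but it reaches the lemma by a genuinely different route than the paper. You compute $\probaM{z}{\tau > n}$ directly through the spectral decomposition of the kernel killed outside $\llbracket 1,T-1\rrbracket$ and sum over the terminal point $y$, which produces the exact identity $\probaM{z}{\tau>n}=\frac{2}{T}\sum_{k\,\mathrm{odd}}\cos^n(\frac{\pi k}{T})\sin(\frac{\pi kz}{T})\cot(\frac{\pi k}{2T})$; the leading constants then come from isolating the top mode $k=1$ (and, when $T$ is even, also $k=T-1$, whose pairing via $\cot x\mp\tan x$ is exactly where the factor $\cos^a(\pi/T)$ with $a=\indic{n\leftrightarrow z}$ emerges — your algebra here checks out). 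The paper instead sums its already-proved ruin asymptotics (Lemma~\ref{lemme-ruine}) over all exit times $k>n$, so that $\probaM{z}{\tau>n}$ becomes a geometric series in $e^{-g(T)}=\cos(\pi/T)$, with the parity factor arising probabilistically from which of the events $\{\tau=\tau_0=k\}$, $\{\tau=\tau_T=k\}$ have nonzero probability ($T$ odd: exactly one per $k$; $T$ even: both or neither, leading to a geometric series in $\cos^2(\pi/T)$ started at $n^*=n+1+a$). Your route is self-contained at the level of this lemma — it does not use the conclusion of Lemma~\ref{lemme-ruine}, only its tail-of-spectrum estimate, and the uniform-in-$z$ relative error again requires normalizing by $\sin(\pi z/T)$ exactly as in the $V_1/V_0$ bound (with $|\sin(\pi kz/T)|\leq k\sin(\pi z/T)$ for $z\leq T/2$ and symmetry otherwise), which you correctly flag; the paper's route is shorter given that lemma and avoids handling the modes $k>T/2$ separately. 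Both yield identical constants, the same $\grdO(e^{-\pi^2 n/T^2})$ error, and the same final Taylor expansion, so the proposal stands as a valid alternative proof.
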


%

\begin{proof}
First of all, we write
\begin{equation}
\label{eq:decomposegambler}
\probaM{z}{\tau > n} = \sum_{k>n} \Big(\probaM{z}{\tau = \tau_0 = k} + \probaM{z}{\tau = \tau_T = k} \Big) \,.
\end{equation}

When $T$ is odd, then in~\eqref{eq:decomposegambler}, for each $k$ in the sum there is only one term which is non-zero:
applying Lemma~\ref{lemme-ruine}
to estimate that term, we get 
\[
\begin{split}
\probaM{z}{\tau > n}  
 & = \big(1 +\grdO (e^{-\frac{\pi^2 n}{T^2}})  \big)
  \frac{2}{T} \sin \left( \frac{z\pi}{T} \right) \tan \left( \frac{\pi}{T} \right)  \sum_{k>n} e^{-g(T) k}  \\
  & =  \big(1 +\grdO (e^{-\frac{\pi^2 n}{T^2}})  \big)
  \frac{2}{T} \sin \left( \frac{z\pi}{T} \right) \tan \left( \frac{\pi}{T} \right) e^{-g(T)n} \frac{\cos\left( \frac{\pi}{T} \right)  }{1- \cos\left( \frac{\pi}{T} \right) }  \,,
 \end{split}
\]
recalling that $e^{-g(T)} = \cos\left( \frac{\pi}{T} \right)$.
This gives the desired result since $\frac{\sin(\theta)}{1-\cos(\theta)} = \frac{1}{\tan(\theta/2)}$.

\smallskip
When $T$ is even, notice that in~\eqref{eq:decomposegambler},
either $k\leftrightarrow z$ and then both terms are non-zero
or $k\not\leftrightarrow z$ and then both terms are zero.
Applying Lemma~\ref{lemme-ruine}, we get 
\[
\probaM{z}{\tau > n} = \big(1 +\grdO (e^{-\frac{\pi^2 n}{T^2}})  \big)  \frac{4}{T} \sin \left( \frac{z\pi}{T} \right)   \tan \left(\frac{\pi}{T} \right) \sum_{k > n} e^{-g(T) k} \indic{k \leftrightarrow z} \,.
\]
\par To deal with the last sum,
denote $n^*= n^*(z) \defeq \min\{ k>n, k\leftrightarrow z\}$: 
note that $n^*$ is equal to $n +1+ a$ with $a = \indic{n\leftrightarrow z}$.
The indices for which the term is not zero can be written as $k = n^* + 2j$ and thus
\[
\sum_{k > n} e^{-g(T) k} \indic{k \leftrightarrow z}
= e^{-n^* g(T)}  \sum_{j \geq 0} e^{-2 g(T) j}  
= e^{-n g(T)} \frac{\cos^{1+a}\left(\frac{\pi}{T} \right) }{1 -\cos^{2}\left(\frac{\pi}{T} \right) } \,,
\]
recalling that $e^{-g(T)} = \cos\left( \frac{\pi}{T} \right)$.
This gives the announced expression.
\end{proof}

\subsection{Range estimates}

Recall the definition of the event $E_x^y(n) = \{M_n^{-} =- x, M_n^+ =y\}$ for any two positive integers $x$ and $y$ (the case where one equals $0$ is dealt in Section \ref{Exy positive RW}).
We use Lemma~\ref{lemme conf strict} 
to estimate $\PP_0(E_x^y(n))$, \textit{i.e.}\ to prove Theorem~\ref{prop equiv proba}.
From this point onward, we always denote $T\defeq x+y$.
Using the spatial invariance of the random walk, we study the probability starting from $x$ to stay in the strip $[0,T]$ \textit{and} to touch both borders before time~$n$. The symmetry of the walk allows us to assume $x \leq y$ and $0 \leq x \leq \frac{T}{2}$.
\par We now write the probability of $E_x^y(n)$ as the following differences
\[ \begin{split}
    \probaM{0}{E_x^y(n)} &= \probaM{0}{M_n^+ < y + 1, M_n^- = -x} - \probaM{0}{M_n^+ < y, M_n^- = -x}\\
    &= \probaM{0}{M_n^+ < y + 1, M_n^- > -x-1} - \probaM{0}{M_n^+ < y + 1, M_n^- > -x} \\
    & \qquad  - \big[ \probaM{0}{M_n^+ < y, M_n^- > -x-1} - \probaM{0}{M_n^+ < y, M_n^- > -x} \big] \,.
\end{split}  
 \]
 Then each of those probabilities is of a strict confinement event with different strips widths and starting points:
 we get
\begin{equation}
\label{Exy:difference}
 \probaM{0}{E_x^y(n)} 
 = f_n(x+1,T+2) - f_n(x,T+1)-f_n(x+1,T+1) + f_n(x,T) \,.
\end{equation}
We can
therefore use Lemma~\ref{lemme conf strict} 
to estimate each of these terms.
We will find different asymptotics
depending on whether $\frac{n}{T^3}$ goes to $0$ or to $+\infty$
(or converges to a constant).
This ratio is known to be the relevant quantity when studying such constrained random walk (see \cite{caravenna2009depinning}).
The main reason is that
in the expansion~\eqref{equiv-pastouche}
we have that $e^{-n g(T)} \sim e^{-n g(T+1)} \sim e^{-n g(T+2)}$
if and only if $\frac{n}{T^3}$ goes to $0$ (see below):
this case will prove to be more intricate because of several cancellations
in~\eqref{Exy:difference}.

\subsubsection{First case: \texorpdfstring{$\lim_{n\to\infty}\frac{n}{T^3} =+\infty$}{}}

In that case, recalling that $e^{-g(T)} = \cos(\frac{\pi}{T})$ we have 
\[
\frac{e^{-g(T+1) n}}{e^{-g(T+2)n}}  =\Big(1 - (1+\bar{o}(1)) \frac{\pi^2}{T^3} \Big)^n \xrightarrow{n\to\infty} 0 \,,
\quad
\frac{e^{-g(T) n}}{e^{-g(T+2)n}} =\Big(1 - (1+\bar{o}(1)) \frac{2\pi^2}{T^3} \Big)^n \xrightarrow{n\to\infty} 0 \,.
\]
Therefore,
in view of~\eqref{equiv-pastouche},
we have that $f_n(x,T+1),f_n(x+1,T+1),f_n(x,T)$
are all negligible compared to $f_n(x+1,T+2)$.
Using \eqref{Exy:difference},
and~\eqref{equiv-pastouche},
we therefore get that
\begin{equation}\label{equiv infini}
\probaM{0}{E_x^y(n)}  =(1+ \bar{o}(1) )  \frac{4}{\pi} \sin \left(\frac{ \pi(x+1)}{T} \right)  e^{-g(T+2)n} \,,
\end{equation}
where the $\bar{o}(1)$ depends only on $T$ and is uniform in $x$.
Using also that $\sin(\frac{\pi(x+1)}{T+2}) = (1+\grdO(\frac1T)) \sin(\frac{\pi (x+1)}{T})$ uniformly in $x$,
we get the desired result.

\subsubsection{Second case: \texorpdfstring{$\lim_{n\to\infty}\frac{n}{T^3} =\alpha \in(0,+\infty)$}{}}

Similarly
as above, we have 
\[
\lim_{n\to\infty} \frac{e^{-g(T+2) n}}{e^{-g(T)n}}  =e^{2\alpha \pi^2} \,,
\quad
\lim_{n\to\infty}  \frac{e^{-g(T+1) n}}{e^{-g(T)n}} = e^{\alpha \pi^2} \,.
\]
Therefore, using~\eqref{Exy:difference} and~\eqref{equiv-pastouche},
we get that
\[
\begin{split}
\frac{\pi}{4} &e^{g(T)n} \probaM{0}{E_x^y(n)} \\
&= (1+\bar{o}(1))  \left[ \sin\left( \frac{\pi (x+1)}{T}\right)e^{2\alpha \pi^2} + \sin\left( \frac{\pi x}{T}\right) -
\left[\sin\left( \frac{\pi (x+1)}{T}\right)+\sin\left( \frac{\pi x}{T}\right) \right] e^{\alpha \pi^2} \right]
\end{split}
\]
where we have used that 
$\sin(\frac{\pi (x+1) }{T+2}) =(1+\bar{o}(1)) \sin(\frac{\pi (x+1)}{T})$
with $\bar{o}(1)$ uniform in $x$,
and similarly with $\sin(\frac{\pi (x+1) }{T+1})$,
$\sin(\frac{\pi x }{T+1})$.
This gives the announced asymptotics.

Let us stress that, in the case where
$x=\grdO(1)$,
we get
\begin{equation}
  \label{equiv proba alpha0}
\probaM{0}{E_x^y(n)} =(1+\bar{o}(1))  \frac{4}{T} (e^{\alpha \pi^2}-1) (e^{\alpha \pi^2} (x+1)-x) e^{-g(T)n}.
\end{equation}
If on the other hand we have $x\to\infty$, then
we have
\begin{equation}
  \label{equiv proba alpha}
\probaM{0}{E_x^y(n)} =(1+\bar{o}(1))  \frac{4}{\pi}  (e^{\alpha \pi^2}-1)^2 \sin \left( \frac{x\pi}{T} \right) e^{-g(T)n} \,,
\end{equation}
and one can also write $(e^{\alpha \pi^2}-1)^2 = 4 e^{\alpha \pi^2} \cosh^2(\alpha \pi^2)$.

\subsubsection{Last case: \texorpdfstring{$\lim_{n\to\infty} \frac{n}{T^3}=0$}{}}

When $\lim_{n\to\infty} \frac{n}{T^3}=0$, the first order terms all cancel each other and we have to look further in the asymptotic expansion.
We will show that
\[
\frac{\pi}{4} \probaM{0}{E_x^y(n)} e^{g(T+1)n} 
= (1+\bar{o}(1))  \frac{\pi^4n^2}{T^6} \sin\Big( \frac{\pi (x+\frac12)}{T}  \Big) 
+ (1+\bar{o}(1))  \frac{\pi^3 n}{T^4}  \left(1-\frac{2x}{T} \right)\, \cos\Big( \frac{\pi x}{T}  \Big)  \,.
\]
This proves the desired result
since the second term can only be dominant if $x/T \to 0$;
it is actually dominant in particular if $x=\grdO(1)$.


Recalling that $\lim_{n\to\infty} \frac{n}{T^2} = +\infty$ (in particular $\frac{n}{T^4} = \bar{o}(\frac{n^2}{T^6})$), we have the following expansions:
\[
\frac{e^{-g(T) n}}{e^{-g(T+1)n}}  = 1 -\frac{\pi^2 n}{T^3} +(1+\bar{o}(1)) \frac{ \pi^4 n^2}{2T^6}  \,,
\qquad
\frac{e^{-g(T+2) n}}{e^{-g(T+1)n}}  = 1 +\frac{\pi^2 n}{T^3} +(1+\bar{o}(1))\frac{ \pi^4 n^2}{2T^6} (1+\bar{o}(1)) \,.
\]
Hence, from~\eqref{Exy:difference}, using~\eqref{equiv-pastouche}
(with the fact that $n\geq \frac14 T^2\log T$ so that $\grdO\big( e^{-\frac{\pi^2 n}{T^2}} \big) =\bar{o}(T^{-2})$)
we get
\begin{equation*}\label{proba cas 0} 
\begin{split}
   \frac{\pi}{4}  \probaM{0}{E_x^y(n)} e^{g(T+1)n}   =   
       \Bigg\{ \sin  \left( \frac{\pi x}{T} \right) & \Big[  1 -\frac{\pi^2 n}{T^3} +(1+\bar{o}(1)) \frac{ \pi^4 n^2}{2T^6} \Big]    \\
      - & \left( \sin \left( \frac{\pi x}{T+1} \right)  + \sin \left( \frac{\pi (x+1)}{T+1} \right) \right)
  \Big[ 1 + \grdO(T^{-2})\Big]  \\
  & \qquad    + \sin \Big( \frac{\pi(x+1)}{T+2} \Big)\Big[ 1 +\frac{\pi^2 n}{T^3} +(1+\bar{o}(1)) \frac{ \pi^4 n^2}{2T^6}  \Big]  \Bigg\}\, .
\end{split} 
\end{equation*}
Note that we absorbed all terms $\grdO(T^{-2})$ in the $\bar{o}(\frac{n^2}{T^6})$,
since $\lim_{n\to\infty}\frac{n}{T^2} = +\infty$.
Hence, 
we get that
\[
\frac{\pi}{4}  \probaM{0}{E_x^y(n)} e^{g(T+1)n}   =
 (1+\bar{o}(1)) \frac{ \pi^4 n^2}{2T^6} \Big( \sin  \left( \frac{\pi x}{T} \right) + \sin \Big( \frac{\pi(x+1)}{T} \Big)\Big) 
+ A+B \,,
\]
with
\[
\begin{split}
    A &= \sin \left( \frac{x\pi}{T} \right) -  2  \sin \Big( \frac{\pi (x+\frac12)}{T+1} \Big)  + \sin \Big( \frac{\pi(x+1)}{T+2} \Big),\\
    B &=  \frac{n\pi^2}{ T^3}  \left[  \sin \Big( \frac{\pi(x+1)}{T+2} \Big) -  \sin \Big( \frac{\pi x}{T} \Big) \right] \,.
\end{split}
\]
Here, for $A$, we have also used that $\sin ( \frac{\pi x}{T+1} )  + \sin ( \frac{\pi (x+1)}{T+1} ) = 2 \sin ( \frac{\pi (x+\frac12)}{T+1} )  \cos ( \frac{\pi}{2(T+1)} )$
with $\cos ( \frac{\pi}{2(T+1)} ) = 1+\grdO(T^{-2})$ 
and absorbed the $\grdO(T^{-2})$ in the $\bar{o}(\frac{n^2}{T^6})$.
We show below that $A$ is negligible compared to $B$,
so let us start by estimating $B$.

\paragraph*{Term B.}
Note that  setting $v\defeq \frac{T}{2}-x$ we have
\[
B= \frac{n\pi^2}{T^3} \left[ \cos \Big( \frac{\pi v}{ T+2} \Big) -   \cos \Big( \frac{\pi v}{ T} \Big)  \right] \,.
\]
Using the formula for the difference of cosines,
we get that
\[
\begin{split}
\cos \Big( \frac{\pi v}{ T+2} \Big) -   \cos \Big( \frac{\pi v}{ T+1} \Big) 
& = 2 \sin \Big( \frac{\pi v}{ 2(T+1)(T+2)} \Big)  \sin\Big( \frac{\pi v}{T+1}  \frac{T+\frac32}{T+2}  \Big) \\
& = (1+\bar{o}(1)) \frac{\pi v}{T^2} \, \sin\Big( \frac{\pi v}{T}  \Big) \,.
\end{split}
\]
We end up with
\[
B=(1+\bar{o}(1))\frac{n\pi^3 v}{T^5} \, \sin\Big( \frac{\pi v}{T}  \Big) \,.
\]

\paragraph*{Term A.}
As far as $A$ is concerned,
notice that setting $v\defeq \frac{T}{2}-x$ we have
\[
A = \cos \Big( \frac{\pi v}{T}  \Big) -
2  \cos \Big( \frac{\pi v}{T+1}  \Big) +
\cos \Big( \frac{\pi v}{T+2} \Big) \,.
\]
Using the formula for the 
difference of cosines, we get that $A/2$ is equal to
\[
\begin{split}
 - \sin & \left( \frac{\pi v}{2T(T+1)}\right)  \sin \left( \frac{\pi v}{T} \frac{T+\frac12}{T+1}\right) 
+  \sin \left( \frac{\pi v}{2(T+1)(T+2)}\right)  \sin \left( \frac{\pi v}{T} \frac{T(T+\frac32)}{(T+1)(T+2)}\right)   \\
& =\frac{\pi v}{2T^2}  \Bigg[ \sin \left( \frac{\pi v}{T} \frac{T(T+\frac32)}{(T+1)(T+2)}\right)- \sin \left( \frac{\pi v}{T} \frac{T+\frac12}{T+1}\right)  \Bigg]
 + \grdO\Big( \frac{v}{T^3}  \sin \left( \frac{\pi v}{T} \right)\Big) \,,
\end{split}
\]
where we have used that $\sin ( \frac{\pi v}{2T(T+1)}) =  \frac{\pi v}{2T^2} (1+\grdO(T^{-1}))$ and similarly
for $\sin ( \frac{\pi v}{2(T+1)(T+2)}) $.
Using the formula for the difference 
of sines, we get 
\[
\begin{split}
\sin \left( \frac{\pi v}{T} \frac{T(T+\frac32)}{(T+1)(T+2)}\right) & - \sin \left( \frac{\pi v}{T} \frac{T+\frac12}{T+1}\right)  \\
& = -2 \sin \left( \frac{\pi v}{2T (T+2)}\right) 
\cos\left( \frac{\pi v}{T}  \left[ 1+\grdO(T^{-1}) \right]  \right) \,.
\end{split}
\]
Hence, 
we end up with $A = \grdO(\frac{v^2}{T^4}\cos(\frac{\pi v}{T})) +  \grdO(\frac{v}{T^3} \sin(\frac{\pi v}{T}))$.
This is negligible compared to $B$,
since $\lim_{n\to\infty} \frac{n}{T^2} =+\infty$.

\subsubsection{Estimates for a positive random walk}\label{Exy positive RW}

Note that our estimates need adjustments when $x = 0$ or $x = T$. We go back to correct \eqref{Exy:difference} in order to take into account that $0$ is the starting point of the walk. We write
\[ 
\begin{split}
\probaM{0}{E_0^y(n)} & = \probaM{0}{M_n^->-1, M_n^+ < y+1} - \probaM{0}{M_n^->-1, M_n^+ < y} \\
& = f_n(1,T+2) - f_n(1,T+1)
\end{split}
\]
Note that $y=T$ but we will keep separating the notations $y$ and $T$. thus, we have
\begin{equation}
    \probaM{0}{E_0^y(n)} = \frac{4}{\pi} \left[ \sin \left( \frac{\pi}{T+2} \right) e^{-g(T+2)n} - \sin \left( \frac{\pi}{T+1} \right) e^{-g(T+1)n} + \grdO(T^{-3}) \right]
\end{equation}
Once again we have different asymptotics depending on the ratio $n/T^3$ that we rapidly present in the following

\paragraph*{Case $\frac{n}{T^3} \to +\infty$}

As previously, $e^{-g(T+2)n}$ is the dominant term and thus
\begin{equation}
    \probaM{0}{E_0^y(n)} = \frac{4}{\pi}(1+\bar{o}(1)) \sin \left( \frac{\pi}{T+2} \right) e^{-g(T+2)n},
\end{equation}
and we get the formula of \eqref{equiv infini} applied to $x = 0$.

\paragraph*{Case $\frac{n}{T^3} \to \alpha$}

Factorize by $e^{-g(T)n}$ as in the general case, we thus write
\[ \probaM{0}{E_0^y(n)} = \frac{4}{\pi} \left[ \sin \left( \frac{\pi}{T+2} \right) e^{2\alpha \pi^2} - \sin \left( \frac{\pi}{T+1} \right) e^{\alpha \pi^2} + \grdO(T^{-3}) \right] e^{-g(T)n}. \]
That can we rewritten as
\[
\probaM{0}{E_0^y(n)} = \frac{4}{\pi} (1+\bar{o}(1)) e^{\alpha \pi^2} (e^{\alpha \pi^2} - 1) \sin \left( \frac{\pi}{T} \right) e^{-g(T)n},
\]
which is exactly \eqref{equiv proba alpha0} taken at $x=0$.

\paragraph*{Case $\frac{n}{T^3} \to 0$}

In this case, we again factorize by $e^{-g(T+1)n}$ and write
\[ 
\begin{split}
\probaM{0}{E_0^y(n)} = \frac{4}{\pi} \bigg[ \sin \left( \frac{\pi}{T+2} \right) \Big[ 1 & + \frac{n\pi^2}{T^3} + \frac{n^2 \pi^4}{2T^6}(1+\bar{o}(1)) \Big] \\
& - \sin \left( \frac{\pi}{T+1} \right) + \grdO(T^{-3}) \bigg] e^{-g(T+1)n}. 
\end{split}
\]
We are left to compare all the terms in this expression :
\[ A = \sin \left( \frac{\pi}{T+2} \right) - \sin \left( \frac{\pi}{T+1} \right) = -2 \sin \left( \frac{\pi}{2(T+1)(T+2)} \right) \cos \left( \frac{\pi}{2T} \right) \sim -\frac{\pi}{T^2}, \]
\[ B = \frac{n \pi^2}{T^3} \sin \left( \frac{\pi}{T+2} \right) \sim \frac{n\pi^3}{T^4}, \qquad D = \frac{n^2 \pi^4}{T^6} \sin \left( \frac{\pi}{T+2} \right) \sim \frac{n^2\pi^5}{T^7}. \]
See that $A \ll B$ and $D \ll B$ using both $\frac{n}{T^2} \to \infty$ and $\frac{n}{T^3} \to 0$, meaning that
\[ \probaM{0}{E_0^y(n)} = \frac{4n\pi}{T^3} (1+\bar{o}(1)) \sin \left( \frac{\pi}{T+2} \right) e^{-g(T+1)n}. \]

\subsection*{Acknowledgements}

The author would like to thank his PhD advisors Quentin Berger and Julien Poisat for reviewing this paper and for their continual help.

\printbibliography[heading=bibintoc]

\end{document}